\newtheorem{theorem}{Theorem}
\theoremstyle{definition}
\newtheorem{definition}{Definition}
\newtheorem{remark}{Remark}
\theoremstyle{plain}
\newtheorem*{hadamard}{Hadamard Theorem}
\newtheorem*{C-V}{Cohn-Vossen Theorem}
\newtheorem*{dC-W}{do Carmo--Warner Theorem}
\newcommand{\vt}{\vspace{.1cm}}
\newcommand{\R}{\mathbb{R} }
\newcommand{\h}{\mathbb{H}}
\newcommand{\s}{\mathbb{S}}
\renewcommand{\rho}{\varrho}
\renewcommand{\theta}{\varTheta}
\renewcommand{\Theta}{\varTheta}
\renewcommand{\Sigma}{\varSigma}
\renewcommand{\Omega}{\varOmega}
\renewcommand{\Lambda}{\varLambda}
\renewcommand{\tau}{\uptau}
\newcommand{\overbar}[1]{\mkern 1.5mu\overline{\mkern-1.5mu#1\mkern-1.5mu}\mkern 1.5mu}
\newcommand{\tpitchfork}{%
  \vbox{
    \baselineskip\z@skip
    \lineskip-.52ex
    \lineskiplimit\maxdimen
    \m@th
    \ialign{##\crcr\hidewidth\smash{$-$}\hidewidth\crcr$\pitchfork$\crcr}
  }%
}
\begin{document}

\title[Convex hypersurfaces of Riemannian manifolds]
{A survey on convex hypersurfaces \\ of Riemannian manifolds}
\author{Ronaldo Freire de Lima}
\address[A1]{Departamento de Matem\'atica - Universidade Federal do Rio Grande do Norte}
\email{ronaldo.freire@ufrn.br}
\subjclass[2010]{53B02 (primary).}
\keywords{convex hypersurface -- Riemannian manifold.}
\maketitle

\smallskip
\begin{center}
{\emph{Dedicated to Professor Renato Tribuzy\\ on the occasion of his 75th birthday}}
\end{center}

\begin{abstract}
 We survey the main extensions of the classical Hadamard, Liebmann and Cohn-Vossen
rigidity theorems on convex surfaces of $3$-Euclidean space
to the context of convex hypersurfaces
of Riemannian manifolds. The results we present include the one  by
Professor Renato Tribuzy (in collaboration with H. Rosenberg) on rigidity of
convex surfaces of homogeneous $3$-manifolds.
\end{abstract}

\section{Introduction}
Convexity is an ancient  and fundamental geometric concept attributed to
subsets of spaces. It was first considered by Archimedes in his celebrated book:
\emph{On the sphere and cylinder}, published in 225 B.C., which curiously
has two of the most classical convex surfaces in its very title.

In  Euclidean space $\R^3,$ a nonempty subset $\Omega$ is called
\emph{convex} if it contains the line segment joining any two of its points.
A distinguished property of  convex sets
is that they can be characterized by the geometry of their boundaries.
More precisely, a closed proper subset $\Omega$ in $\R^3$ is convex if and only if
there exists at least one \emph{supporting plane} $\Pi$  at any point
$p$ on its boundary $\partial\Omega$,  meaning that
$\Omega$ lies in one of the half-spaces determined by
$\Pi\owns p$ (see  \cite{minkowski}, pp. 137--140).

Due to  this characterization of convex sets,
a smooth embedded surface $\Sigma$ of $\R^3$ is said to be \emph{convex} if it is the boundary
of  an open convex set $\Omega$ in $\R^3,$ which is then called a \emph{convex body}.
Planes, spheres, and right circular cylinders are canonical examples of convex surfaces.
The \emph{saddle}, graph of the function $z=x^2-y^2,$ is a typical example
of an embedded non convex surface of $\R^3.$

Clearly, if $\Sigma=\partial\Omega$ is a convex surface, the
supporting planes of $\Omega$ are precisely  the tangent planes of $\Sigma.$
In particular, for any $p\in\Sigma,$  a local graph $\Sigma'\subset\Sigma$ through $p$
--- defined over an open set of $T_p\Sigma$ --- is
necessarily contained in one of the closed half-spaces determined by $T_p\Sigma.$ This property,
being local, can also  be attributed to immersed surfaces (possibly  having self-intersections)
and is called \emph{local convexity}.

The formula for the Gaussian curvature of a graph immediately
gives that a locally convex surface $\Sigma$ has nonnegative Gaussian curvature.
(The converse does not hold, as we shall show in the next section.) In addition,
the Gaussian curvature of $\Sigma$ at a point $p$ is positive if and only if the local graph
$\Sigma',$ as described above, intersects $T_p\Sigma$ only at $p.$ If so, we say that
$p$ is an \emph{elliptic point}, and also that  $\Sigma$ is \emph{strictly convex} at $p.$
Thusly, spheres are strictly convex everywhere, whereas
right circular cylinders are nowhere strictly convex.

In \cite{hadamard}, J. Hadamard established the striking fact that
compact locally strictly convex surfaces in $\R^3$ are necessarily embedded, convex,
and diffeomorphic to the unit sphere $\s^2.$ The precise statement is as follows.

\begin{hadamard}
Let $\Sigma$ be a compact connected smooth surface  immersed  in Euclidean space $\R^3$
with positive Gaussian curvature.
Then, $\Sigma$ is orientable, embedded, and convex. In addition, its Gauss map $N:\Sigma\rightarrow\s^2$ is
a diffeomorphism.
\end{hadamard}

J. Stoker \cite{stoker} extended Hadamard Theorem by proving that, if $\Sigma$ is a complete
noncompact immersed surface in $\R^3$ with positive Gaussian curvature,
then it is a graph over a convex open set in $\R^2.$ Usually, these two results are put
together and then called the Hadamard--Stoker Theorem.

A compact connected surface of $\R^3$ with positive Gaussian curvature is called
an \emph{ovaloid}. So, Hadamard Theorem tells us that ovaloids are embedded, convex and
diffeomorphic to $\s^2.$ Considering this fact, it is natural to ask on which conditions an
ovaloid is necessarily a round (i.e., totally umbilical) sphere of $\R^3.$

H. Liebmann \cite{liebmann} addressed this question and provided an answer by
proving that an ovaloid with either constant mean curvature or constant
Gaussian curvature is a totally umbilical  sphere. In the  mean curvature case, a
less prestigious -- although stronger --
result was obtained earlier by J. Jellett \cite{jellett}, who assumed the surface to be star shaped, instead
of an ovaloid. For this reason, this Liebmann Theorem has been
frequently  referred to as  the Liebmann--Jellett Theorem. In the
Gaussian curvature case, a  proof given by D. Hilbert (cf. \cite[Appendix 5]{hilbert}) has imposed itself
along the time, so that the result bears his name together with Liebmann's.

The deepest theorem regarding ovaloids is probably the one due to S. Cohn-Vossen \cite{cohn-vossen}, which
asserts that such a surface is \emph{rigid}. This means that, if $\Sigma_1$ and $\Sigma_2$ are isometric
ovaloids, then they are congruent, that is, they coincide up to a rigid motion of $\R^3.$
In fact, Cohn-Vossen proved
his theorem assuming $\Sigma_1$ and $\Sigma_2$ analytic. Afterwards, G. Herglotz \cite{herglotz}
provided a  succinct alternate proof under the  weaker
assumption that  $\Sigma_1$ and $\Sigma_2$ are of class $C^2.$

Since their appearance to date, these classical  rigidity theorems on convex surfaces
by Hadamard, Liebmann, and Cohn-Vossen have been extended, in  many ways,
to the more general context of hypersurfaces in  Riemannian manifolds.
In what follows, we survey the main results obtained on this matter, which
have a significant contribution of  Brazilian mathematicians --- most notably
Manfredo do Carmo --- as we shall see.

We will also take this opportunity to present, in our last section,
a result by
Professor Renato Tribuzy in collaboration with H. Rosenberg on rigidity of
convex surfaces of homogeneous $3$-manifolds.




\section{Preliminaries} \label{sec-preliminaries}

Throughout  the paper, unless otherwise stated, the  Riemannian manifolds
we consider are all orientable, smooth (of class $C^\infty$), and of dimension
at least $2.$
Given a Riemannian manifold $\overbar M^{n+1},$ we will consider its hypersurfaces
mostly as isometric immersions
$f:M^n\rightarrow\overbar M^{n+1},$
where $M^n$ is some $n$-dimensional
Riemannian manifold.

Occasionally,  we shall consider isometric immersions
$f:M^n\rightarrow\overbar M^{n+p}$
of codimension $p\ge 1.$ In this setting,
we will  write $TM$ and $TM^\perp$ for the tangent bundle and normal
bundle of $f$, respectively, and
\[
\alpha_f:TM\times TM\rightarrow TM^\perp
\]
for its second fundamental form, that is,
\[
\alpha_f(X,Y)=\overbar{\nabla}_XY-\nabla_XY,
\]
where $\overbar{\nabla}$ and $\nabla$ denote the Riemannian connections of $\overbar{M}$ and $M,$
respectively. Also, given $\xi\in TM^\perp,$ we define  $A_\xi:TM\rightarrow TM$ by
\[
A_\xi X=-(\text{tangential component of} \,\,\, \overbar{\nabla}_X\xi)
\]
and call it the \emph{shape operator} of $f$ in the normal direction $\xi.$

As is well known, $A_\xi$ is self-adjoint, so that, for each $x\in M,$
there exists an  orthonormal basis $\{X_1,\dots, X_n\}\subset T_x M$ of eigenvectors
of $A_\xi.$ Each vector $X_i$ is called a \emph{principal direction} of $f$ at $x$ (with respect to the normal $\xi$),
and the corresponding eigenvalue $\lambda_i$ is called the \emph{principal curvature} of $f$ at $x$ (with respect to the normal $\xi$).
So, we have
\[
A_\xi X_i=\lambda_i X_i, \,\,\,\, i=1,\dots, n.
\]

The following equality relating the second fundamental form $\alpha_f$ and a shape operator $A_\xi$ holds:
\[
\langle\alpha_f(X,Y),\xi\rangle=\langle A_\xi X,Y\rangle \,\,\,\, \forall \,X,Y\in TM, \,\, \xi\in TM^\perp,
\]
where $\langle \,,\, \rangle$ stands for the Riemannian metric of both $M$ and $\overbar{M}.$

The second fundamental form $\alpha_f$ of an isometric immersion
$f:M^n\rightarrow\overbar M^{n+p}$ is said to be \emph{semi-definite} if, for all $\xi\in TM^\perp,$
the 2-form
\[
(X,Y)\in TM\times TM\mapsto \langle\alpha_f(X,Y),\xi\rangle
\]
is semi-definite (i.e., the nonzero eigenvalues of the shape operator
$A_\xi$  have all the same sign) on $M.$
Also, we say that $\alpha_f$ is \emph{positive}
(respect. \emph{negative}) \emph{semi-definite} in the
normal direction $\xi\in TM^\perp$ if the nonzero eigenvalues
of $A_\xi$ are all positive (respect. {negative}) on $M.$

We remark that, in the above setting, the semi-definiteness of
the second fundamental form $\alpha_f$ does not
imply that it is either positive or negative semi-definite in a given
normal direction $\xi\in TM^\perp.$  To see that in the case of hypersurfaces,
consider a smooth plane curve
$\gamma: I\subset\R\rightarrow\R^2,$ 
and define the \emph{cylinder over} $\gamma$ as the immersion
\[
\begin{array}{cccc}
f\colon & I\times\R^{n-1} & \rightarrow &\R^2\times\R^{n-1}\\
        & (t,x)     & \mapsto     &(\gamma(t),x),
\end{array}
\]
which is easily seen to be  an orientable hypersurface in $\R^{n+1}.$ Also,
for any of the two choices of a unit $\xi\in T(I\times\R)^\perp,$ one has
\[
A_\xi=\pm
\left[
\begin{array}{ccccc}
k & & & &\\
  & 0 & & &\\
  &  &\ddots & &\\
  &  & & &0
\end{array}
\right],
\]
where $k$ is the curvature of $\gamma.$ In particular,
$\alpha_f$ has semi-definite second fundamental form. In addition, assuming
that there exist  $t_{1}, t_2\in I$ satisfying $k(t_{1})<0<k(t_2),$
we have that, in  any  normal direction
$\xi\in T(I\times\R)^\perp,$
$\alpha_f$ is positive semi-definite at $(t_1,x)$ if and only if
it is negative semi-definite at $(t_2,x).$ Therefore, $\alpha_f$ is
neither positive semi-definite nor negative semi-definite in any normal direction (Fig. \ref{fig-cylinder}).

\begin{figure}[htbp]
\begin{center}
\includegraphics{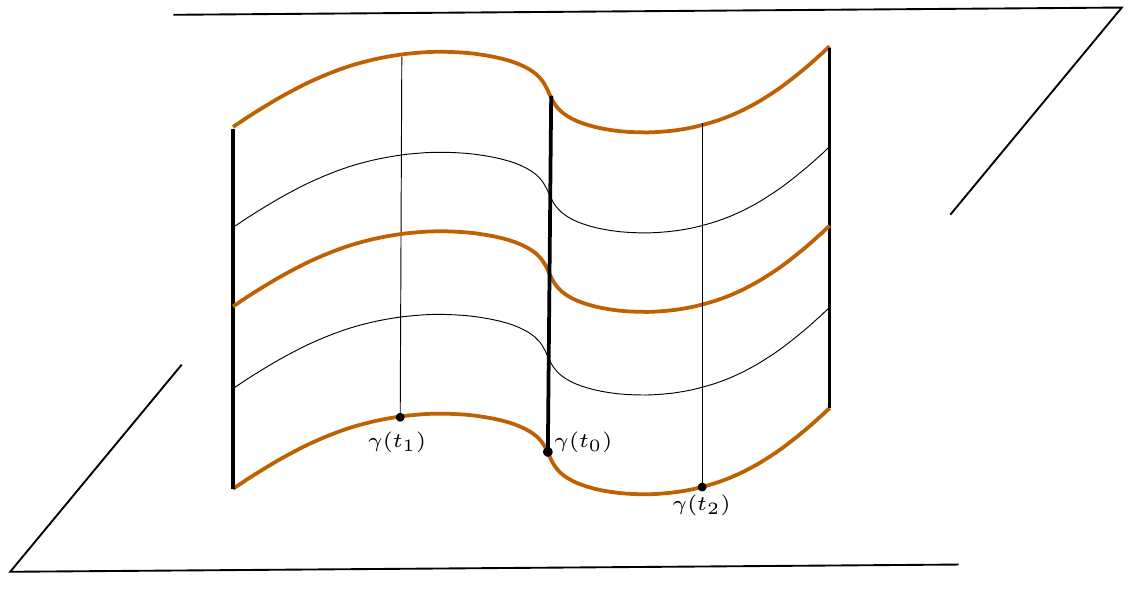}
\caption{\small Non locally convex cylinder over a plane curve $\gamma.$}
\label{fig-cylinder}
\end{center}
\end{figure}

It should also be noticed that, for $n=2,$ $\Sigma:=f(I\times\R)$ is not
locally convex (as we discussed in the introduction) at the points $f(t_0,x)\in\Sigma$ such that $k(t_0)=0,$
and that the Gaussian curvature $\Sigma$ vanishes identically
(since the same is true for one of its principal curvatures).
Thus, in $\R^3,$  nonnegativity of Gaussian curvature does not imply local convexity.

It is time for us to settle the concept of local convexity for a general orientable hypersurface
$f\colon M^n\rightarrow\overbar M^{n+1},$  and establish its relations with the second
fundamental form $\alpha_f.$  With this purpose, let us first observe that,
for  any point $x\in M,$ there is an open neighborhood $U$ of $x$ in $M$ such that
$\Sigma:=f(U)$ is a graph of a function $\phi$
(under the normal exponential map of $\overbar M^{n+1}$) over an open set $V\owns 0$ of
the tangent space $f_*(T_xM).$ In this context, $f$ is said to be \emph{locally convex}
(resp. \emph{locally strictly convex}) at $x$
if, for a suitable normal direction $\xi\in TM^\perp,$ the function
$\phi$ is nonnegative on $V$ (resp. positive on $V-\{0\}).$

A hypersurface $f\colon M^n\rightarrow\overbar M^{n+1}$ is then  called \emph{locally convex}
(resp. \emph{locally strictly convex}) if it is {locally convex}
(resp. {locally strictly convex}) at any point of $M.$

Following a suggestion by M. do Carmo, R. Bishop showed in \cite{bishop} that
local convexity is equivalent to semi-positiveness of the second fundamental form,
as stated below.

\begin{theorem}[Bishop \cite{bishop}]
An orientable  hypersurface $f\colon M^n\rightarrow\overbar M^{n+1}$
is locally convex (resp. locally  strictly convex) if and only if,
in a suitable normal direction $\xi\in TM^\perp,$
the second fundamental form $\alpha_f$ is positive semi-definite (resp. positive definite).
\end{theorem}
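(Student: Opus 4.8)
I would derive the whole statement from a single infinitesimal identity: the Hessian of the graph function $\phi$ at its base point equals the shape operator $A_\xi$. \emph{Setup.} Fix $x\in M$, put $p=f(x)$, and (orientability supplies a globally defined unit normal, one of whose two choices we call $\xi$) write $f(U)$, for a small neighborhood $U\ni x$, as the graph under the normal exponential map of a smooth $\phi\colon V\to\R$ over an open set $V\ni 0$ of $f_*(T_xM)$; thus $\Pi:=\exp_p(V)$ is a hypersurface of $\overbar M^{n+1}$ tangent to $f(U)$ at $p$, carrying the unit normal field that extends $\xi$, and $\phi(0)=0$, $d\phi_0=0$. Let $\rho$ be the signed distance to $\Pi$ in $\overbar M^{n+1}$, taken positive on the $\xi$-side, and let $\pi$ be the nearest-point projection onto $\Pi$; then $\rho\circ f=\phi\circ(\pi\circ f)$ near $x$, and $d(\pi\circ f)_x=f_*$ since $d\pi_p$ is the identity on $T_p\Pi=f_*(T_xM)$. \emph{The key identity.} First I would note that $\Pi=\exp_p(V)$ is totally geodesic at $p$: for $v\in T_p\Pi$ the ambient geodesic $t\mapsto\exp_p(tv)$ lies in $\Pi$, so the second fundamental form of $\Pi$ vanishes at $p$. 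Applying the Gauss formula to the function $\rho$,
\[
{\rm Hess}(\rho\circ f)(X,Y)=\overbar{\rm Hess}\,\rho\,(f_*X,f_*Y)+\langle\overbar{\nabla}\rho,\alpha_f(X,Y)\rangle ,
\]
and evaluating at $p$, where $\overbar{\nabla}\rho=\xi$ and where $\overbar{\rm Hess}\,\rho$ restricted to $T_p\Pi=T_pf(U)$ is the (vanishing) second fundamental form of $\Pi$, gives ${\rm Hess}(\rho\circ f)_p(X,Y)=\langle\alpha_f(X,Y),\xi\rangle=\langle A_\xi X,Y\rangle$. Since $\rho\circ f$ and $\phi$ are critical at their base points and $d(\pi\circ f)_x=f_*$, I conclude ${\rm Hess}\,\phi_0=A_\xi|_x$ under the isometric identification $f_*\colon T_xM\to T_pf(U)$.

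Three of the four implications are then immediate. If $f$ is locally convex at $x$, then $\phi\ge 0$ near $0$ with $\phi(0)=0$, so $0$ is a local minimum and $A_\xi|_x={\rm Hess}\,\phi_0\ge 0$; as $x$ varies (the sign of $\xi$ being forced wherever $\alpha_f\ne 0$) one gets $\alpha_f$ positive semi-definite in the direction $\xi$. If $A_\xi|_x$ is positive definite, then $0$ is a \emph{strict} local minimum of $\phi$, so after shrinking $\phi>0$ on $V\setminus\{0\}$ and $f$ is locally strictly convex at $x$. Finally, if $f$ is locally strictly convex everywhere, then a fortiori it is locally convex, hence $\alpha_f$ is positive semi-definite in the direction $\xi$; the strict positivity of $A_\xi$ would follow from an analysis of the relative nullity distribution of $\alpha_f$ (on the open set where it is nonzero it integrates to totally geodesic leaves along which $\phi$ vanishes identically, contradicting strict convexity), which I would either carry out or cite from \cite{bishop}.

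The remaining implication is the substantial one: if $\alpha_f$ is positive semi-definite in the direction $\xi$ throughout a neighborhood of $x$, then $\phi\ge 0$ near $0$. The pointwise identity alone is not enough, since ${\rm Hess}\,\phi_0\ge 0$ does not force $\phi\ge 0$ (e.g. $\phi(u)=u_1^2-u_2^4$); one must use the hypothesis on a full neighborhood. When $\overbar M^{n+1}=\R^{n+1}$ this is immediate: $\Pi$ is then an affine hyperplane, $\rho\circ f=\phi\circ\pi$ literally, positive semi-definiteness of $\alpha_f$ in the direction $\xi$ is exactly convexity of $\phi$ on all of $V$, and a convex function with $\phi(0)=0$, $d\phi_0=0$ is nonnegative. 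In the general Riemannian setting the correction terms $\overbar{\rm Hess}\,\rho$ at points other than $p$ are not sign-controlled, and the passage from this infinitesimal condition to the neighborhood statement — Bishop's principle that infinitesimal convexity implies local convexity — is the main obstacle. I would attack it by comparing $f(U)$ with suitable geodesic spheres tangent to $T_pf(U)$ at $p$, using Jacobi-field (Riccati) estimates to control $\overbar{\rm Hess}\,\rho$ along the normal geodesics of $\Pi$ on a small ball $B_\delta(p)$, and running a continuity/maximum-principle argument; absent a short self-contained version of this step, I would state it and refer to \cite{bishop}.
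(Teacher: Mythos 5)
Your reduction to the identity ${\rm Hess}\,\phi_0=A_\xi|_x$ is correct, and it does dispose of the routine implications; but the heart of the theorem is exactly the step you leave open, namely that positive semi-definiteness of $\alpha_f$ in the direction $\xi$ on a neighborhood forces $\phi\ge 0$. That is the whole content of Bishop's ``infinitesimal convexity implies local convexity'' (the pointwise Hessian identity, as you note, gives nothing in the degenerate case, and away from $p$ the correction term $\overbar{\rm Hess}\,\rho$ has no sign to absorb a merely semi-definite $A_\xi$). The survey states this theorem only by citation and offers no proof, so there is no argument in the paper to lean on: a proposal that sketches a strategy (tangent geodesic spheres, Riccati comparison, a continuity/maximum-principle argument) and then defers to \cite{bishop} has proved the easy half and cited away the substantive half. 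Your Euclidean observation (semi-definiteness of $A_\xi$ on the whole neighborhood is convexity of $\phi$, hence $\phi\ge 0$) is fine, but the passage to a general ambient manifold is precisely where the work lies and it is missing.

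The other unfinished step, that local strict convexity forces $A_\xi$ positive definite, cannot be closed by the relative-nullity argument you gesture at: the nullity distribution integrates to totally geodesic leaves only on open sets where its rank is locally constant, and the degeneracy locus may have empty interior. Indeed that implication is false as literally stated: the graph of $z=x^4+y^4$ in $\R^3$ is locally strictly convex at every point --- at $(a,0,a^4)$ the deviation from the tangent plane is $(x-a)^2\bigl((x+a)^2+2a^2\bigr)+y^4$, positive off the contact point --- yet its second fundamental form is degenerate along the two curves over the coordinate axes (and vanishes at the origin). So no argument can rescue that direction; the correct content is that positive semi-definiteness is equivalent to local convexity while positive definiteness is merely sufficient for strict local convexity. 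Your write-up should prove the semi-definite equivalence in full (or restate the strict part with this caveat) rather than attempt the false converse, and in either case the main implication must be carried out, not referred back to the very paper being proved.
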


We point out that the concepts of convex set and convex body extend naturally
to subsets of any totally convex Riemannian manifold $\overbar M^{n+1}.$ (Recall that
a Riemannian manifold is called \emph{totally convex} if there exists a unique geodesic
joining any two of its points.)

\begin{remark} \label{rem-convexbody}
By adapting an argument by J. Heijenoort \cite{heijenoort},
it is possible to show that, in any Hadamard manifold
(i.e., complete simply connected Riemannian manifold with nonnegative sectional curvature) $\overbar M^{n+1},$
a complete embedded hypersurface $f:M^n\rightarrow\overbar M^{n+1}$ is locally convex if and only if
$f(M)$ is the boundary of a convex body. On the other hand, as pointed out in \cite[Remark 3]{alexander},
there exist simply connected Riemannian manifolds whose
geodesic spheres are embedded and locally convex, yet they do
not bound  convex bodies.
\end{remark}

We will denote by $\mathbb{Q}_\epsilon^{n+1}$
the $(n+1)$-dimensional simply connected space form of constant sectional
curvature $\epsilon\in\{0,1,-1\},$ i.e.,
the Euclidean space $\R^{n+1}$ ($\epsilon=0$),
the unit sphere $\s^{n+1}$ ($\epsilon =1$), and the hyperbolic space $\h^{n+1}$ ($\epsilon=-1$).

Given an oriented hypersurface $f:M\rightarrow\mathbb{Q}_\epsilon^{n+1},$ let us see how
the sectional curvature of $M$ affects the semi-definiteness of the second fundamental form
$\alpha_f.$ To this end, choose an orthonormal frame
$\{X_1, \dots, X_n\}\subset TM$  of principal directions of $f$ with
corresponding principal curvatures $\lambda_1, \dots ,\lambda_n.$
Denoting by $K_M$ the sectional curvature of $M,$ the well known
Gauss equation for hypersurfaces of space forms yields (see, e. g., \cite{dajczer})
\[
K_M(X_i, X_j)=\epsilon +\lambda_i\lambda_j \,\,\, \forall i\ne j\in\{1,\dots, n\}.
\]
Therefore, \emph{the second fundamental form of $f:M\rightarrow\mathbb{Q}_\epsilon^{n+1}$
is semi-definite if and only if} $K_M\ge\epsilon.$

We conclude this preliminary section by  introducing the fundamental
concept of rigidity of isometric immersions.
\begin{definition}
  We say that an isometric immersion $f:M^n\rightarrow\overbar M^{n+p}$ is \emph{rigid} if, for any
  other isometric immersion $g:M^n\rightarrow\overbar M^{n+p},$ there exists an ambient isometry
  $\Phi:\overbar M^{n+p}\rightarrow\overbar M^{n+p}$ such that $g=\Phi\circ f.$
\end{definition}

\section{Convex hypersurfaces of $\mathbb{Q}_\epsilon^{n+1}$}

Regarding  extensions of Hadamard, Liebmann and Cohn-Vossen Theorems to
more general ambient manifolds, a first natural
step is to verify  their validity for
hypersurfaces of the space forms $\mathbb{Q}_\epsilon^{n+1}.$

In the Euclidean case, based on
a Hadamard--Stoker type theorem due to J. Heijenoort \cite{heijenoort}, and on a local rigidity theorem due to
R. Beez \cite{beez} and W. Killing \cite{killing}, R. Sacksteder \cite{sacksteder1,sacksteder2} succeeded to
extend both  Hadamard--Stoker and Cohn-Vossen Theorems  to complete
hypersurfaces of $\R^{n+1}$ with semi-definite second fundamental form,
as stated below. We stress the fact that semi-definiteness of
the second fundamental form is a  weaker condition than local convexity, and that in both
Hadamard--Stoker and Cohn-Vossen Theorems the surfaces are assumed to be strictly convex.

\begin{theorem}[Sacksteder \cite{sacksteder1,sacksteder2}]  \label{th-sacksteder}
Let $f\colon M^n\rightarrow\R^{n+1}$ be an orientable, complete, and connected hypersurface with
semi-definite second fundamental form,
which is strictly convex  at one point. Then, the following  hold:
\begin{itemize}
   \item[{\rm i)}] $f$ is an embedding and  $M$ is homeomorphic  to either $\s^n$ or $\R^n.$
   \item[{\rm ii)}] $f(M)$  is the boundary of a convex body in $\R^{n+1}.$
   \item[{\rm iii)}] $f$ is rigid.
\end{itemize}
\end{theorem}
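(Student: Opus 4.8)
The plan is to prove the three conclusions in order, leaning on the two external ingredients the paper has already invoked --- Heijenoort's Hadamard--Stoker type theorem and the Beez--Killing local rigidity theorem --- and supplying the global bridge between them.

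\medskip

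\emph{Step 1 (embedding and topology).} First I would observe that, by the Gauss equation computation in Section~\ref{sec-preliminaries}, semi-definiteness of $\alpha_f$ forces $K_M\ge 0$, and strict convexity at a point gives a point where $K_M>0$ for all sectional planes there. The core analytic input is a Heijenoort-type argument: one shows that the set where $f$ is locally strictly convex is open and closed (openness is clear from positive-definiteness of $A_\xi$ being an open condition; closedness uses that the nonzero principal curvatures keep a fixed sign, so the limit of elliptic points cannot become a saddle), hence by connectedness $f$ is locally strictly convex everywhere, or else $M$ splits off a Euclidean factor along a totally geodesic foliation and reduces to a lower-dimensional instance of the same problem --- the classical Sacksteder ``ruling'' dichotomy. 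In the locally strictly convex case, apply Heijenoort's theorem: a complete, connected, locally convex hypersurface in $\R^{n+1}$ that is strictly convex at one point is an embedding onto the boundary of a convex body, and such a boundary is homeomorphic to $\s^n$ when compact and to $\R^n$ (a convex graph) when noncompact. This simultaneously delivers (i) and (ii). The ruled case requires iterating: the rulings are complete straight lines, the hypersurface is a cylinder over a lower-dimensional convex hypersurface with an elliptic point, and one pushes the dichotomy down in dimension until Heijenoort applies; the product structure then still yields an embedded boundary of a convex body homeomorphic to $\R^n$.

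\medskip

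\emph{Step 2 (rigidity).} For (iii), let $g\colon M^n\to\R^{n+1}$ be any second isometric immersion. The strategy is to upgrade the pointwise Beez--Killing theorem --- which says that at a point where the second fundamental form has rank $\ge 3$ (equivalently, the index of relative nullity is $\le n-3$, a consequence of $K_M>0$ when $n\ge 3$) the immersion is \emph{infinitesimally} and \emph{locally} rigid, so the shape operators of $f$ and $g$ agree there up to sign --- into a global statement. Concretely: at the elliptic point $x_0$, $A_\xi$ is positive definite, so rank $=n\ge 3$ (the case $n=2$ is handled separately below), and Beez--Killing gives a neighborhood on which $g$ is congruent to $f$; the set of points where the second fundamental forms of $f$ and $g$ coincide (after fixing the sign/orientation by continuity) is then open. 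To see it is closed, and to propagate across the possibly-nonelliptic region, one uses the convex-body structure from Steps~1: on a convex hypersurface the support function and the second fundamental form are linked so that isometric convex hypersurfaces have a well-defined common orientation, and Sacksteder's key lemma shows the relative nullity directions of $f$ are also relative nullity directions of $g$, along which both immersions are ruled by parallel lines --- so rigidity along the flat part is reduced to rigidity of the lower-dimensional strictly convex ``profile,'' again by induction on dimension. Once the second fundamental forms agree everywhere, the fundamental theorem of submanifolds (uniqueness part) produces the ambient isometry $\Phi$ with $g=\Phi\circ f$.

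\medskip

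\emph{Step 3 (surface case $n=2$).} When $n=2$, Beez--Killing is vacuous, so (iii) is exactly the Cohn-Vossen rigidity theorem for noncompact complete convex surfaces in $\R^3$; for the compact (ovaloid) sub-case this is the classical Cohn-Vossen--Herglotz theorem already quoted in the introduction, and the noncompact extension follows from the fact, established in Step~1, that $\Sigma=f(M)$ is a convex graph over a convex domain together with an integral (Herglotz-type) argument on the unbounded convex surface.

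\medskip

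The main obstacle, as I see it, is Step~2 in the presence of a nontrivial relative nullity (flat) region: pointwise Beez--Killing rigidity genuinely fails there, so one cannot just ``spread'' local congruences by a connectedness argument, and the real work --- which is the heart of Sacksteder's second paper --- is showing that the nullity foliation and its leaf-space convex profile are themselves intrinsic, so that the problem descends cleanly in dimension without losing control of the global convex-body picture along the way.
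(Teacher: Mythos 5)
Your plan for (i)--(ii) (reduce to Heijenoort's Hadamard--Stoker theorem) is in fact the route of Sacksteder's original paper, whereas the survey presents do Carmo--Lima's simpler Morse-theoretic proof; that difference by itself would be fine. The genuine gap is the step you use to put yourself in Heijenoort's hypotheses. The whole point of the hypothesis ``semi-definite second fundamental form'' (as emphasized in Section~\ref{sec-preliminaries}) is that the sign of $A_\xi$ may a priori change from point to point, while Heijenoort needs local convexity, i.e.\ positive semi-definiteness with respect to one coherently chosen normal (Bishop's theorem). Your open-and-closed argument does not deliver this: the set of locally strictly convex points is open but in general \emph{not} closed --- a limit of elliptic points can be a parabolic point, and continuity of the principal curvatures only rules out the saddle alternative. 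Moreover, the fallback you propose, that failure of strict convexity everywhere forces $M$ to split off a Euclidean factor and $f$ to be a cylinder over a lower-dimensional convex hypersurface, is false: the graph in $\R^{n+1}$ of a smooth convex function on $\R^n$ vanishing on a ball and strictly convex outside it is complete, has elliptic points, has a compactly contained flat set, and splits off nothing (relative-nullity splitting occurs only when the nullity index is positive everywhere). So the dichotomy cannot carry Step~1. What must actually be proved --- and is precisely the content of Sacksteder's first paper, or of the do Carmo--Lima argument of sweeping the tangent hyperplane at the elliptic point along the inner normal to the first singular level --- is that completeness together with one elliptic point forces a globally consistent convex side; the example $z=x^3(1+y^2)$ quoted in the survey shows this cannot come from any local continuity argument, since semi-definiteness alone does not even give local convexity.

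Your Step 2 is essentially the survey's sketch (Beez--Killing where $\alpha_f$ has rank at least $3$, hence $\alpha_g=\pm\alpha_f$ there; propagation of this identity across the relative-nullity set, which is the heart of Sacksteder's second paper and which you explicitly defer to; then the fundamental theorem of hypersurfaces), so beyond the deferral there is no objection there. Step 3, however, does not work as described: Herglotz's proof is an integral identity over a \emph{closed} surface, and there is no routine ``Herglotz-type integral argument on the unbounded convex surface'' --- rigidity of complete noncompact convex surfaces is delicate (unbounded convex surfaces of total curvature less than $2\pi$ admit nontrivial isometric bendings), so the noncompact $n=2$ case cannot be settled by a compact-style integral formula. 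In the survey's sketch, Herglotz is invoked only for the identity $\alpha_g=\pm\alpha_f$ on the suitable subset, with the global extension again left to Sacksteder's theorem; your sketch leaves this case essentially unproved.
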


The original proofs of these Sacksteder's results
are  rather involved. In \cite{docarmo-lima}, M. do Carmo and E. Lima
provided a simpler proof of (i)--(ii) by means of  Morse Theory. The main idea consists in
considering the tangent space of $f(M)$ at the strictly convex point $f(x)\in f(M),$
and moving it in the direction of the inner normal $\xi(x)$  until it reaches
a singularity, if any. With this approach, they showed
that either  no singularity occurs, in which case
$M$ is homeomorphic to $\R^n$ (in fact, $f(M)$ is a graph over a convex open set in a
hyperplane orthogonal to $\xi(x)$
in $\R^{n+1}$), or the singular set is a singleton,
in which case $f(M)$ is an embedded topological $n$-sphere of $\R^{n+1}.$
Once  established the embeddedness of $f,$ the proof of (ii) is standard.

For the proof of the rigidity assertion (iii) of Theorem \ref{th-sacksteder},
one can argue as follows.
Since $f$ has semi-definite second fundamental form, Gauss equation gives
that $M$ has nonnegative sectional curvature. Hence,
any hypersurface $g:M^n\rightarrow\R^{n+1}$ has semi-definite second fundamental form.
Assuming $n\ge 3,$ and recalling that $f$ is strictly convex at a point, one has  that
the set $M'\subset M$ on which the second fundamental form $\alpha_f$ of $f$ has rank at least
$3$ is nonempty. In this case, the Beez--Killing Theorem we mentioned asserts that $f|_{M'}$ is rigid.
From this, and the semi-definiteness  of $\alpha_f$ and $\alpha_g,$ one has either
$\alpha_f=\alpha_g$ or $\alpha_f=-\alpha_g$ on $M'.$
The same conclusion holds in the case $n=2$ from a result by Herglotz \cite{herglotz}.
Now, Sacksteder's main result in \cite{sacksteder2} ensures that either of these equalities
extends to the whole of $M.$ The rigidity of $f$, then, follows from the Fundamental
Theorem for hypersurfaces of Euclidean spaces.

The cylinders over curves (see Section \ref{sec-preliminaries}) show that the hypothesis on the
existence of an elliptic point is necessary in Sacksteder Theorem. Let us see that the same is true regarding
completeness. Indeed, the graph of the function
$z=x^3(1+y^2),$ $|y|<1/2,$  is easily seen to be non convex along the line $x=0.$
However, its second
fundamental form is semi-definite everywhere, as can be verified by a direct computation. Therefore,
by Sacksteder Theorem, this graph  cannot be a part of a complete surface with semi-definite second fundamental form.

Inspired by Sacksteder's results, M. do Carmo and F. Warner considered
the analogous problem for compact hypersurfaces  of $\s^{n+1}$ and $\h^{n+1},$ obtaining the following

\begin{theorem}[do Carmo -- Warner \cite{docarmo-warner}] \label{th-docarmo-warner}
Let  $f:M^n\rightarrow\s^{n+1}$  be a non-totally geodesic hypersurface
with semi-definite second fundamental form,
where $M^n$ is a compact,  connected, and orientable
Riemannian manifold.
Then, the following hold:
\begin{itemize}
   \item[{\rm i)}] $f$ is an embedding and  $M$ is homeomorphic  to \,$\s^n.$
   \item[{\rm ii)}] $f(M)$  is the boundary of a convex body contained in an open hemisphere of \,$\s^{n+1}.$
   \item[{\rm iii)}] $f$ is rigid.
\end{itemize}
Moreover, the assertion {\rm (i)} and the convexity property in {\rm (ii)}
still hold if one replaces the sphere $\s^{n+1}$ by the hyperbolic space  \,$\h^{n+1}.$
\end{theorem}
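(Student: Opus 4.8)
The plan is to mimic the do Carmo--Lima / do Carmo--Warner strategy, adapting the Morse-theoretic and rigidity arguments to the constant-curvature ambient $\mathbb{Q}_\epsilon^{n+1}$ with $\epsilon=1$ and $\epsilon=-1$, and to explain why the ``convex body'' and ``rigidity'' conclusions are more delicate in the hyperbolic case. First I would recall that, since $\alpha_f$ is semi-definite, the Gauss equation $K_M(X_i,X_j)=\epsilon+\lambda_i\lambda_j$ gives $K_M\ge\epsilon$; together with compactness of $M$ this forces the set $M'$ where $\mathrm{rank}\,\alpha_f\ge 3$ to be nonempty once $f$ is not totally geodesic (for $\epsilon=1$, a point with all $\lambda_i$ zero would have $K_M=1$ there, but one shows the rank must jump; the key input is that a non-totally-geodesic hypersurface of a space form with semi-definite $\alpha_f$ has an elliptic point, i.e. a point where $A_\xi$ is positive definite). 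The existence of an elliptic point is what replaces the explicit ``strictly convex at one point'' hypothesis of Sacksteder's theorem.

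For part (i), I would use the height-function Morse theory of do Carmo--Lima, now with the ``height'' taken along a geodesic pencil from a fixed point of $\mathbb{Q}_\epsilon^{n+1}$ (or, in the spherical case, the function $x\mapsto\langle f(x),p\rangle$ for a fixed $p\in\s^{n+1}$, which is a Morse-type function whose nondegenerate critical points are exactly the elliptic points). Semi-definiteness of $\alpha_f$ ensures that at every critical point the Hessian of the height function is semi-definite, so that the function has no critical points of index $1,\dots,n-1$; compactness then yields via Reeb-type / Morse-lemma arguments that $M$ has the homotopy type of a space built from cells of dimension $0$ and $n$ only, and in fact that $M$ is homeomorphic to $\s^n$ and $f$ is an embedding. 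For (ii) in the spherical case one notes that the image, being embedded and locally convex (by Bishop's theorem) in $\s^{n+1}$, must lie in an open hemisphere --- this uses that a compact locally convex embedded hypersurface cannot ``wrap around'' the sphere, and the hemisphere is totally convex, so the Heijenoort-type argument of Remark~\ref{rem-convexbody} applies and $f(M)$ bounds a convex body there. In the hyperbolic case one invokes the same Heijenoort-type argument directly, since $\h^{n+1}$ is a Hadamard manifold (Remark~\ref{rem-convexbody}), giving embeddedness and the convex-body property; this is exactly the ``moreover'' clause.

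For the rigidity assertion (iii) (spherical case), I would argue as in the Euclidean Sacksteder sketch already given in the excerpt: from $K_M\ge 1$ every isometric immersion $g:M^n\to\s^{n+1}$ has semi-definite $\alpha_g$; on the open dense-in-$M'$ set where $\alpha_f$ has rank $\ge 3$ the Beez--Killing local rigidity theorem (valid for space forms) gives $\alpha_g=\pm\alpha_f$ there, and for $n=2$ one substitutes the Herglotz argument; semi-definiteness of both forms pins down a single global sign, and a Sacksteder-type continuation/connectedness argument propagates $\alpha_g=\alpha_f$ (after fixing the sign) to all of $M$. The Fundamental Theorem of submanifolds for hypersurfaces of $\s^{n+1}$ then produces the ambient isometry $\Phi$ with $g=\Phi\circ f$. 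I expect the main obstacle to be precisely this globalization step: establishing that the elliptic point forces $M'\ne\varnothing$ and then that the ``rank $\ge 3$'' set is large enough (dense, or at least that its complement cannot disconnect the sign-determination), which in the original works is the technically heaviest part; a secondary subtlety is why rigidity is asserted only for $\epsilon=1$ and not $\epsilon=-1$ --- in $\h^{n+1}$ a convex body need not be compact-in-the-relevant-sense and the continuation argument can fail, so I would explicitly flag that the hyperbolic ``moreover'' deliberately omits (iii).
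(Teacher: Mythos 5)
Your route is genuinely different from the paper's --- the survey's proof of this theorem does no Morse theory in the curved ambient at all, but instead projects everything to Euclidean space via the Beltrami maps and quotes Sacksteder's Theorem \ref{th-sacksteder} --- and as written your version has real gaps exactly at the points where do Carmo--Warner's argument does its work. First, you assume rather than prove the two crucial facts of the spherical case: the existence of a point where $\alpha_f$ is definite, and the containment of $f(M)$ in an open hemisphere. ``A compact locally convex embedded hypersurface cannot wrap around the sphere'' is not an argument (the totally geodesic equator shows the statement is delicate and really uses non-total-geodesy), and these two facts are precisely what do Carmo--Warner extract by combining the Beltrami map with Sacksteder's theorem; in the codimension-$p$ generalization their role is played by the Dajczer--Gromoll nullity result. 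Note also that semi-definiteness of $\alpha_f$ does not by itself give local convexity in a fixed normal direction (see the cylinder example in Section \ref{sec-preliminaries}), so Bishop's theorem cannot be invoked as directly as you do. Second, your Morse-theoretic step fails as stated: for $h(x)=\langle f(x),p\rangle$ on $\s^{n+1}\subset\R^{n+2}$, at a critical point one has $p=a\,f(x)+b\,\xi(x)$ and $\mathrm{Hess}\,h(X,Y)=b\,\langle A_\xi X,Y\rangle-a\,\langle X,Y\rangle$; when $A_\xi$ is only positive semi-definite this is in general indefinite (kernel directions contribute $-a$), so ``no critical points of intermediate index'' does not follow from semi-definiteness. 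This extra zeroth-order term is exactly why one passes to $\R^{n+1}$, where linear height functions have Hessian proportional to the second fundamental form and the do Carmo--Lima/Sacksteder machinery applies.

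Third, in the hyperbolic ``moreover'' you obtain embeddedness from Remark \ref{rem-convexbody}, but that remark concerns complete \emph{embedded} hypersurfaces (local convexity is equivalent to bounding a convex body); using it to conclude embeddedness is circular. The paper again gets embeddedness by composing with the Beltrami map of $\h^{n+1}$ and applying Sacksteder to the resulting complete convex hypersurface of $\R^{n+1}$. Finally, a smaller but substantive point: rigidity in the hyperbolic case is not omitted because it ``can fail'' --- do Carmo--Warner conjectured it and it is true (Theorem \ref{th-andrade-delima1}); it is absent from item (iii) only because their method did not establish it. Your rigidity sketch for the sphere (Beez--Killing on the rank-$\ge 3$ set plus a continuation of the sign) is plausible in outline, but the globalization you flag as the hard step is indeed the hard step, and the paper's source handles it with special mixed Beltrami-like maps rather than by rerunning Sacksteder's Euclidean continuation inside $\s^{n+1}$.
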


The proof of do Carmo--Warner Theorem
relies on the properties of the so-called \emph{Beltrami maps},  which are  central projections
of  open hemispheres of $\s^{n+1}$ (respect.  hyperbolic space $\h^{n+1}$) on
suitable hyperplanes of Euclidean space $\R^{n+2}$ (respect. Lorentz space $\mathbb{L}^{n+2}$).

More precisely, in the spherical case, the
Beltrami map for the hemisphere $\mathcal H$ of
$\s^{n+1}$ centered at $e_{n+2}:=(0,0,\dots, 0,1)\in\R^{n+2}$ is
\begin{equation} \label{eq-beltramimap}
\begin{array}{cccc}
\varphi: & \mathcal{H} & \rightarrow & \R^{n+1}\\
         &     x         & \mapsto     & \frac{x}{x_{n+2}}\,,
\end{array}
\end{equation}
where $x_{n+2}$ stands for the $(n+2)$-th coordinate of $x$ in $\R^{n+2}$
(Fig. \ref{fig-beltramispherical}).
\begin{figure}[htbp]
\begin{center}
\includegraphics{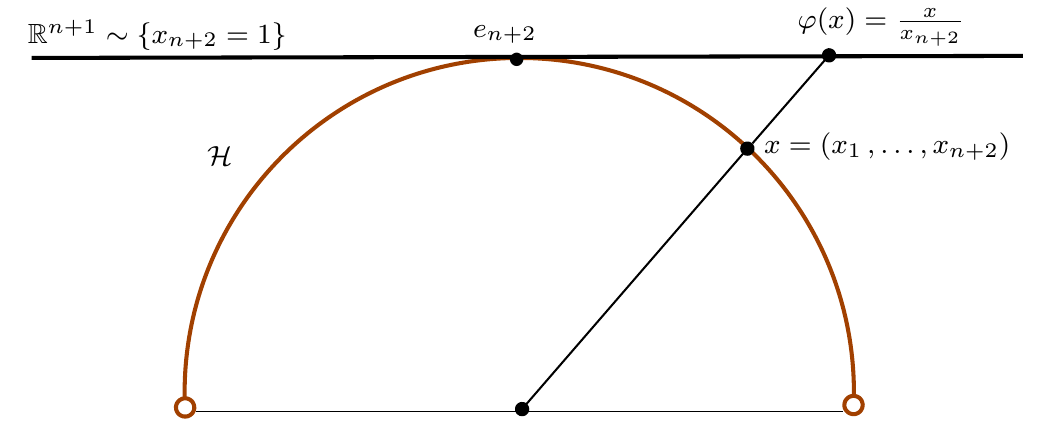}
\caption{\small Beltrami map of the hemisphere $\mathcal H$ centered at $e_{n+2}.$}
\label{fig-beltramispherical}
\end{center}
\end{figure}

Analogously, in the hyperbolic case, the Beltrami map is given by
\begin{equation} \label{eq-beltramimap10}
\begin{array}{cccc}
\varphi: & \mathbb{H}^{n+1} & \rightarrow & B^{n+1}\\
         &     x         & \mapsto     & \frac{x}{x_{n+2}}\,,
\end{array}
\end{equation}
where $ B^{n+1}$ stands for the unit ball of the
affine subspace $x_{n+2}=1$\, of \,$\mathbb{L}^{n+2}$
(Fig. \ref{fig-beltramihyperbolic}).

It is easily verified  that, in both cases, the Beltrami map $\varphi$ is a diffeomorphism. Moreover,
$\varphi$ and its inverse are geodesic maps,
that is, they take geodesics to geodesics and, in particular,
convex sets to convex sets.

\begin{figure}[htbp]
\begin{center}
\includegraphics{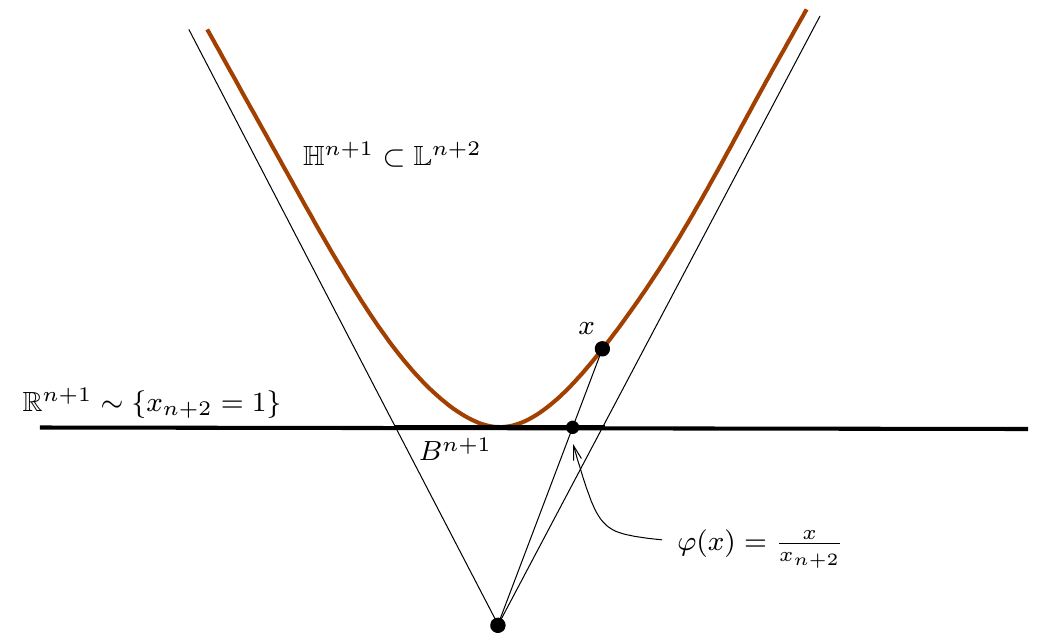}
\caption{\small Beltrami map of hyperbolic space $\h^{n+1}$.}
\label{fig-beltramihyperbolic}
\end{center}
\end{figure}

In their proof, do Carmo and Warner cleverly used Beltrami maps combined with
Sacksteder Theorem \ref{th-sacksteder}  to
show that, in the spherical case, there exists a point $x\in M$ at which
the second fundamental form of $f$ is definite, and also
that $f(M)-\{f(x)\}$ is contained in an open hemisphere $\mathcal H'$ of $\s^{n+1}$
whose boundary meets $f(M)$ at $f(x).$ From this, since $M$ is compact, they concluded
that $f(M)$ is contained in an open  hemisphere $\mathcal H$ of $\s^{n+1}.$ Considering
now both cases, spherical and hyperbolic, and
denoting by $\varphi$ the Beltrami map of either $\mathcal H$ or $\h^{n+1},$ as they show,
$\varphi\circ f:M\rightarrow\R^{n+1}$ is complete and convex.
Then, from Sacksteder Theorem, $\varphi\circ f$ is an embedding and
$\varphi(f(M))$ is the body of a convex body in $\R^{n+1},$
which implies that the same is true for $f$ and  $f(M),$ respectively.

A similar idea, involving  special mixed Beltrami-like maps,
was used for the proof of the rigidity of $f$
in the spherical case.
At the very end of the paper, do Carmo and Warner conjectured that the rigidity of $f$ holds in
the hyperbolic case as well.

In \cite{andrade-delima}, jointly with R. de Andrade, the author extended
do Carmo and Warner theorem to immersions of arbitrary codimension, and also settled
affirmatively their aforementioned conjecture. More precisely, the following result was obtained.

\begin{theorem}[Andrade -- de Lima \cite{andrade-delima}]   \label{th-andrade-delima1}
Let $f:M^n\rightarrow\mathbb Q_\epsilon^{n+p}$ be an
isometric immersion of a  compact connected Riemannian manifold  into
the $(n+p)$-dimensional space form of constant sectional curvature $\epsilon=\pm 1.$
Assume  that $f$ is non-totally geodesic and has
semi-definite second fundamental form.
Under these conditions, the following assertions hold:
\begin{itemize}
  \item[\rm i)] $f$ is an embedding of $M$ into a totally geodesic $(n+1)$-dimensional
submanifold $\mathbb Q_\epsilon^{n+1}\subset \mathbb Q_\epsilon^{n+p}.$
  \item[\rm ii)] $f(M)$ is the boundary of a compact convex set of $\mathbb Q_\epsilon^{n+1}.$
  In particular, $M$ is homeomorphic to $\s^n.$
  \item[\rm iii)] $f$ is rigid.
\end{itemize}
\end{theorem}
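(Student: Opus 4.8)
The plan is to reduce the codimension-$p$ problem to the hypersurface case already handled by the do Carmo--Warner Theorem \ref{th-docarmo-warner}, and then to settle the hyperbolic rigidity by exploiting the Euclidean picture obtained through the Beltrami map together with Sacksteder's Theorem \ref{th-sacksteder}. First I would establish the crucial \emph{codimension reduction}: since $\alpha_f$ is semi-definite and $f$ is non-totally geodesic, for each $x\in M$ the \emph{first normal space} $N_1(x)\subset T_xM^\perp$ --- the subspace spanned by the image of $\alpha_f$ at $x$ --- turns out to be one-dimensional wherever $\alpha_f(x)\ne 0$. Indeed, semi-definiteness in the sense defined in Section \ref{sec-preliminaries} means that for \emph{every} $\xi\in T_xM^\perp$ the form $\langle\alpha_f(\cdot,\cdot),\xi\rangle$ is semi-definite; applied to two linearly independent normals on which $\alpha_f$ had nontrivial components, one would produce directions making one of the forms indefinite, a contradiction. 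Thus the first normal bundle has rank $\le 1$ on the open set where $f$ is not geodesic. The next step is to verify that $N_1$ is parallel in the normal connection: this follows from the Codazzi equation for immersions into a space form, since the ambient curvature term is tangential and therefore cannot push $\alpha$ out of a rank-one normal subbundle. By the classical reduction-of-codimension theorem (e.g.\ in \cite{dajczer}), $f(M)$ is then contained in a totally geodesic $\mathbb Q_\epsilon^{n+1}\subset\mathbb Q_\epsilon^{n+p}$, and $f$, viewed into this $\mathbb Q_\epsilon^{n+1}$, is an orientable hypersurface with semi-definite (hence, by Bishop's Theorem, locally convex in a suitable orientation) second fundamental form that is non-totally geodesic.

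Having landed inside $\mathbb Q_\epsilon^{n+1}$, I would invoke do Carmo--Warner Theorem \ref{th-docarmo-warner} directly: in the spherical case this already yields (i), (ii) and (iii); in the hyperbolic case it yields (i) and the convexity in (ii), i.e.\ $f(M)$ bounds a compact convex body in $\h^{n+1}$, and in particular $M\cong\s^n$. So the only thing left is assertion (iii) for $\epsilon=-1$, which is precisely the conjecture of do Carmo and Warner. For this I would pass to the Euclidean model via the Beltrami map $\varphi:\h^{n+1}\to B^{n+1}\subset\R^{n+1}$ of \eqref{eq-beltramimap10}. Because $\varphi$ is a geodesic diffeomorphism, $\varphi\circ f:M\to\R^{n+1}$ is an embedded convex hypersurface onto the boundary of a compact convex body; being compact it is an ovaloid-type hypersurface with semi-definite second fundamental form and, since $f$ is non-totally geodesic and $M\cong\s^n$, it has an elliptic point (a totally geodesic sphere $\s^n\subset\R^{n+1}$ certainly has points of positive definite second fundamental form, and the convex embedding cannot be a hyperplane). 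Hence Sacksteder's Theorem \ref{th-sacksteder}(iii) applies: $\varphi\circ f$ is a rigid hypersurface of $\R^{n+1}$.

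The final and most delicate step is to \emph{transport rigidity back} from $\R^{n+1}$ to $\h^{n+1}$. Suppose $g:M^n\to\mathbb Q_{-1}^{n+p}$ is another isometric immersion; by the codimension reduction applied to $g$ (which also has semi-definite $\alpha_g$, since $M$ has $K_M\ge-1$ by the Gauss equation, exactly as observed in the excerpt's discussion of Sacksteder rigidity), $g$ maps into some totally geodesic $\h^{n+1}$, and $\varphi\circ g$ is a compact convex hypersurface of $\R^{n+1}$ isometric --- as an abstract Riemannian manifold --- to $M$. Here is the subtlety: the Beltrami map is \emph{not} an isometry, so $\varphi\circ f$ and $\varphi\circ g$ need not be isometric as submanifolds of $\R^{n+1}$, and one cannot merely quote Euclidean Cohn-Vossen rigidity. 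The correct route, which I expect to be the main obstacle, is to argue at the level of second fundamental forms in $\h^{n+1}$ itself: on the open dense set $M'$ where $\alpha_f$ has rank $\ge 3$ (nonempty because of the elliptic point and $n\ge 3$; the case $n=2$ is covered by Herglotz's theorem \cite{herglotz} applied in $\h^3$, or rather by the classical rigidity of ovaloids transplanted via conformal/projective arguments), the Beez--Killing local rigidity theorem forces $\alpha_f=\pm\,\alpha_g$ on $M'$; then Sacksteder's extension argument (his main result in \cite{sacksteder2}), applied to the Euclidean images $\varphi\circ f$ and $\varphi\circ g$ whose second fundamental forms are related to those of $f,g$ by the explicit conformal factor of $\varphi$, propagates the sign choice and the equality to all of $M$; finally the Fundamental Theorem of submanifolds for space forms yields an isometry $\Phi$ of $\h^{n+1}$ with $g=\Phi\circ f$, which extends trivially to $\mathbb Q_{-1}^{n+p}$. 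The technical heart is thus checking that the Beltrami map, while distorting the metric, distorts the second fundamental form in a controlled multiplicative way compatible with both the Beez--Killing alternative and Sacksteder's analyticity-free continuation argument.
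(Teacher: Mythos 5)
Your reduction-of-codimension step contains the main gap. The pointwise claim that semi-definiteness in every normal direction forces the first normal space $N_1$ to be at most one-dimensional (where $\alpha_f\ne 0$) is fine, but the assertion that $N_1$ is then parallel ``by the Codazzi equation, since the ambient curvature term is tangential'' is false as stated. In a space form the Codazzi equation only yields $\nabla^\perp\xi=0$ for a local spanning normal $\xi$ of $N_1$ at points where the second fundamental form has rank at least $2$; where the rank is $\le 1$ the line $N_1$ is free to rotate in the normal bundle. The cylinder over a twisted space curve, $(t,x)\in I\times\R^{n-1}\mapsto(\gamma(t),x)\in\R^3\times\R^{n-1}=\R^{n+2}$ with $\gamma$ of nonzero curvature and torsion, has $\langle\alpha(\cdot,\cdot),\xi\rangle$ semi-definite for every normal $\xi$, satisfies Codazzi trivially, has one-dimensional first normal spaces, and yet does not reduce codimension. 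So the whole difficulty is global: one must control the locus where the rank degenerates, using compactness. This is exactly where the paper's proof diverges from yours: it composes $f$ with the Beltrami map, proves the transformation law \eqref{eq-sff} between $\alpha_f$ and $\alpha_{\varphi\circ f}$ (valid in codimension $p$), uses Dajczer--Gromoll \cite{dajczer-gromoll} to produce a point of definite second fundamental form in the spherical case, and then invokes Jonker's Euclidean theorem \cite{jonker} on immersions with semi-definite second fundamental form to obtain the reduction of codimension, with Sacksteder's Theorem \ref{th-sacksteder} giving (i)--(ii); do Carmo--Warner is not applied directly to an already-reduced hypersurface.

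The rigidity part (iii) for $\epsilon=-1$ has two further problems. First, for a competing immersion $g:M^n\rightarrow\mathbb Q_{-1}^{n+p}$ you argue that $\alpha_g$ is semi-definite because $K_M\ge-1$; that implication comes from the Gauss equation for \emph{hypersurfaces} and fails in codimension $\ge 2$, where $K_M(X,Y)=-1+\langle\alpha(X,X),\alpha(Y,Y)\rangle-\|\alpha(X,Y)\|^2$ imposes no sign condition on each $A_\xi$. So your codimension reduction for $g$ is unjustified even granting the first step. Second, the ``propagation'' of $\alpha_f=\pm\alpha_g$ from the rank-$\ge 3$ set to all of $M$ via ``Sacksteder's extension argument applied to the Euclidean images'' is precisely the unproven heart of the matter, and you acknowledge as much. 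The paper proceeds differently: it uses Beltrami maps together with \eqref{eq-sff} to show that the set of totally umbilical points of $f$ does not disconnect $M$, and then applies Sacksteder's rigidity theorem \cite{sacksteder2} (see \cite[Theorem 6.14]{dajczer}) directly in the hyperbolic setting for $n>2$, with the case $n=2$ settled by Fomenko--Gajubov \cite{fomenko-gajubov} rather than Herglotz. You correctly identified that a controlled distortion law for second fundamental forms under the Beltrami map is the needed tool --- it is exactly \eqref{eq-sff} --- but both that identity and the disconnection/continuation argument are assumed rather than established, so the proposal does not yet constitute a proof.
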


In the proof of Theorem \ref{th-andrade-delima1}, as in do Carmo--Warner Theorem, we
make use of the Beltrami maps $\varphi$ to recover the Euclidean situation. A fundamental result, which allowed us to
apply this method, is the following relation we obtained between the second fundamental forms
$\alpha_f$ and $\alpha_g$ of $f$ and $g:=\varphi\circ f\colon M\rightarrow\R^{n+1}:$
\begin{equation}\label{eq-sff}
\langle\alpha_f(f_*X,f_*Y),\xi_f\rangle=\phi^2\langle\alpha_g(g_*X,g_*Y),\xi_g\rangle,  \,\,\, X, Y\in T\mathcal H^{n+p},
\end{equation}
where $\xi_f$ and $\xi_g$ are (bijectively related)  normal fields to $f$ and $g,$ respectively,
$\mathcal{H}^{n+p}$ denotes either an open hemisphere of $\s^{n+p}\subset\R^{n+p+1}$ or
$\h^{n+p}\subset\mathbb L^{n+p+1}$ (with standard metric $\langle\,,\,\rangle$),
and $\phi$ stands for the function
\[
x=(x_1\,, \dots, x_{n+p+1})\in \mathcal{H}^{n+p} \,\mapsto\, \phi(x):= 1/x_{n+p+1}\,.
\]
It should be noted that no such relation was established by do Carmo and Warner for
hypersurfaces, i.e., for $p=1.$

In the spherical case,  by means of a result due to Dajczer and Gromoll \cite{dajczer-gromoll}, we show that there is a point
$x\in M$ at which the second fundamental form of $f$ is positive definite. Then,
considering the  Beltrami map for the hemisphere of $\s^{n+p}$ centered at $x,$ together with
a reduction of codimension theorem for hypersurfaces of Euclidean space, due to Jonker \cite{jonker}, we show (i) and (ii) for
$\epsilon=1.$ The case $\epsilon=-1$ is analogous.

As for the rigidity of $f,$  we saw that the spherical case was settled by do Carmo and Warner.
For the hyperbolic case, we again use Beltrami maps and identity \eqref{eq-sff}
to show that the set of totally umbilical points of $f$ does not disconnect $M.$ In this case, for $n>2,$
the rigidity of $f$ follows from another celebrated rigidity theorem by Sacksteder \cite{sacksteder2}
(see \cite[Theorem 6.14]{dajczer} for an alternate proof). The case
$n=2$ was proved by V. Fomenko and G. Gajubovin (cf. \cite[Theorem 5]{fomenko-gajubov}).

The  technique  of combining Beltrami maps with equality \eqref{eq-sff} also gives  the following
result, whose first part constitutes a Hadamard-type theorem. (The final statement follows from Theorem \ref{th-andrade-delima1}.)

\begin{theorem}[Andrade -- de Lima \cite{andrade-delima}]  \label{th-andrade-delima}
Let $f:M^n\rightarrow\mathcal{H}^{n+1}$ be a  compact connected hypersurface, where $\mathcal{H}^{n+1}$ is either
the open hemisphere of $\mathbb S^{n+1}\subset\R^{n+2}$ centered at $e_{n+2}:=(0,0,\dots 0,1)$
or the hyperbolic space $\h^{n+1}\subset\mathbb{L}^{n+2}.$
Then, the following assertions are equivalent:
\begin{itemize}
\item[\rm i)] $f$ is locally strictly convex.

\item[\rm ii)] The Gauss-Kronecker curvature of $f$ is nowhere vanishing.

\item[\rm iii)] $M$ is orientable and, for a unit normal field $\xi$  on $M,$
the map
$$
\begin{array}{cccc}
\psi: & M^n & \rightarrow & \mathbb S^n \\

      & x   & \mapsto     &  \frac{\xi(x)-\langle \xi(x),e_{n+2}\rangle e_{n+2}}{\sqrt{1-\langle \xi(x),e_{n+2}\rangle^2}}
  \end{array}
$$
is a well-defined diffeomorphism, where  
$\mathbb S^n$ stands for the $n$-di\-men\-sion\-al unit sphere of
the Euclidean orthogonal complement  of $e_{n+2}$ in $\R^{n+2}.$
\end{itemize}
Furthermore, any of the above conditions implies that $f$ is rigid and embeds $M$ onto the
boundary of a compact convex set in $\mathcal{H}^{n+1}.$
\end{theorem}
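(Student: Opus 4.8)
The plan is to establish the cycle of implications (i) $\Rightarrow$ (ii) $\Rightarrow$ (iii) $\Rightarrow$ (i), and then derive the final rigidity-and-embedding statement from Theorem \ref{th-andrade-delima1}. The implication (i) $\Rightarrow$ (ii) is essentially local and follows directly from Bishop's Theorem: local strict convexity means $\alpha_f$ is positive definite in a suitable normal direction, hence all principal curvatures are nonzero of the same sign, so their product --- the Gauss-Kronecker curvature --- is nowhere zero. Conversely, (ii) $\Rightarrow$ (i) is the delicate step and is where the Beltrami-map machinery of the paper enters. First I would note that non-vanishing Gauss-Kronecker curvature, together with compactness and connectedness, forces $M$ to be orientable and, via the relation \eqref{eq-sff}, forces the pushed-forward immersion $g = \varphi\circ f\colon M\to\R^{n+1}$ to have nowhere-vanishing Gauss-Kronecker curvature as well (the conformal factor $\phi^2$ being positive). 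Then the classical Hadamard Theorem applied to $g$ --- or rather the fact that a compact hypersurface of $\R^{n+1}$ with nonzero Gauss-Kronecker curvature has definite second fundamental form, being an ovaloid-type argument via the Gauss map --- gives that $\alpha_g$ is definite, and pulling back through \eqref{eq-sff} shows $\alpha_f$ is definite, i.e. $f$ is locally strictly convex.

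For the equivalence with (iii), I would argue that the map $\psi$ is precisely the composition of the Euclidean Gauss map of $g=\varphi\circ f$ with the identification of the unit sphere of $\R^{n+1}$ (the Euclidean orthogonal complement of $e_{n+2}$) with $\s^n$; alternatively, $\psi(x)$ is the normalized projection of the normal $\xi(x)$ onto the hyperplane $e_{n+2}^\perp$, which is exactly the Gauss-map direction after the central projection. Under this identification, the Jacobian of $\psi$ is, up to a positive factor coming from the Beltrami map, the Gauss-Kronecker curvature of $g$, hence of $f$. So $\psi$ is a local diffeomorphism if and only if (ii) holds; and since $M$ is compact and connected and $\s^n$ is connected and simply connected for $n\ge 2$, a local diffeomorphism $\psi\colon M\to\s^n$ is automatically a covering map, hence a global diffeomorphism --- giving (ii) $\Rightarrow$ (iii). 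The reverse (iii) $\Rightarrow$ (ii) is immediate since a diffeomorphism has nowhere-singular differential, so the Gauss-Kronecker curvature cannot vanish. One must also check well-definedness of $\psi$, i.e. that $\langle\xi(x),e_{n+2}\rangle^2 < 1$ everywhere, which follows once one knows $f(M)$ lies in the open hemisphere and hence that $f$ is transverse, in the appropriate sense, to the boundary sphere; this can be bootstrapped from the convex-body conclusion below or proved directly from (ii).

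Finally, for the concluding assertion, I would observe that local strict convexity implies in particular that $\alpha_f$ is positive definite in a normal direction, which is certainly semi-definite; since $f$ is a compact connected hypersurface of $\s^{n+1}$ or $\h^{n+1}$ and it is non-totally-geodesic (indeed $\alpha_f$ is definite at every point, so it is nowhere totally geodesic), Theorem \ref{th-andrade-delima1} applies verbatim and yields that $f$ is rigid and embeds $M$ onto the boundary of a compact convex set in $\mathcal H^{n+1}$. The main obstacle is the step (ii) $\Rightarrow$ (i): one must transfer the "nonzero Gauss-Kronecker curvature implies definite second fundamental form" phenomenon from the well-understood Euclidean/ovaloid setting through the Beltrami map, and this is exactly what identity \eqref{eq-sff} is designed to accomplish --- so the proof hinges on verifying that the image $g(M)=\varphi(f(M))$ is a genuine compact hypersurface of $\R^{n+1}$ (which requires knowing $f(M)$ sits in the open hemisphere, in the spherical case) and then invoking the classical Hadamard/Liebmann-circle of ideas for $g$. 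A secondary technical point is the precise identification of $\psi$ with the Euclidean Gauss map of $g$ up to the explicit normalization; once that is pinned down, the covering-space argument closing (iii) is routine.
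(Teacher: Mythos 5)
Your proposal follows essentially the same route the paper indicates for this theorem: transferring the problem to $\R^{n+1}$ via the Beltrami map, using identity \eqref{eq-sff} to pass (non)degeneracy and definiteness of the second fundamental form between $f$ and $g=\varphi\circ f$, identifying $\psi$ with the Euclidean Gauss map of $g$, and deducing the final rigidity-and-embedding assertion from Theorem \ref{th-andrade-delima1}. The argument is sound (the Euclidean step that nonvanishing Gauss--Kronecker curvature forces a definite second fundamental form on a compact hypersurface is the standard constant-index plus support-point fact), so no substantive correction is needed.
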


A natural question arises from do Carmo--Warner Theorem: Could one replace
compact by complete in its statement and still get to the same conclusions?
In the spherical case, the question is irrelevant, since
any sectional curvature of a convex hypersurface $f:M\rightarrow\s^{n+1}$ is at least $1,$ by Gauss equation.
Therefore, by Bonnet--Myers Theorem, $M$ is necessarily compact if the induced metric is complete.
In the hyperbolic case, it is known that there exist complete convex hypersurfaces
of $\h^{n+1}$ which are not embedded (see \cite{spivak}, pg. 124).
So, in this case, the answer for the above question is negative.

In \cite{currier}, J. Currier considered strictly convex hypersurfaces of $\h^{n+1}$
whose principal curvatures are at least $1.$ At points where all the principal curvatures
are greater than $1,$
these  hypersurfaces are locally supported by horospheres
of $\h^{n+1}.$ This means that, at such a point, call it $x,$
the hypersurface is tangent to a horosphere $\Sigma$ of $\h^{n+1}$
and, except for $x$ itself, a neighborhood of $x$
in the hypersurface lies in the convex connected
component of $\h^{n+1}-\Sigma.$
This geometric property allowed Currier to apply Morse Theory in the same way  do Carmo and Lima
did in \cite{docarmo-lima}, showing that such a hypersurface is necessarily embedded and rigid, being either
a topological $n$-sphere bounding a convex body or a horosphere itself.

Again by using Beltrami maps, we were able to obtain in \cite{andrade-delima} the following
version of Currier's result in arbitrary codimension, as stated below.

\begin{theorem}[Andrade -- de Lima \cite{andrade-delima}] \label{th-hconvexity}
Let $f:M^n\rightarrow\h^{n+p}$ be an isometric immersion
of an orientable complete connected Riemannian manifold $M^n$ into the hyperbolic space $\h^{n+p}.$
Assume that there is an orthonormal frame $\{\xi_1\,, \dots ,\xi_p\}$ in $TM^\perp$ such that
all the eigenvalues of the shape operators $A_{\xi_i}$  are at least $1.$
Then, $f$ admits a reduction of codimension to one, $f:M^n\rightarrow\h^{n+1}.$
As a consequence, the following assertions hold:
\begin{itemize}
  \item[\rm i)] $f$ is and embedding and $f(M)$ is the boundary of a convex body in $\h^{n+1}.$
  \item[\rm ii)] $M$ is homeomorphic to either $\s^n$ or $\R^n.$
  \item[\rm iii)] $f$ is rigid and $f(M)$ is a horosphere if $M$ is noncompact.
\end{itemize}
\end{theorem}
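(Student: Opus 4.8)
The plan is to imitate the strategy of the do Carmo--Warner and Andrade--de Lima proofs by transporting the problem to Euclidean space via a Beltrami map, the only difference being that the hypothesis ``all principal curvatures $\ge 1$'' is exactly the condition one needs for the image under a \emph{suitable} Beltrami-type map to be locally convex (rather than merely having semi-definite second fundamental form) in the Euclidean picture. First I would establish the reduction of codimension. Since each shape operator $A_{\xi_i}$ has all eigenvalues $\ge 1$, the second fundamental form $\alpha_f$ is positive definite in every direction of the frame, hence in particular semi-definite; by the Gauss equation for $\h^{n+p}$ one gets that the first normal bundle $N_1$ (spanned by the images of $\alpha_f$) has rank $1$ wherever $\alpha_f$ does not degenerate, and the strict positivity forces $N_1$ to be a parallel line subbundle of $TM^\perp$. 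A standard reduction-of-codimension theorem (e.g.\ the one of Dajczer, as used in Theorem~\ref{th-andrade-delima1}) then yields a totally geodesic $\h^{n+1}\subset\h^{n+p}$ containing $f(M)$, so from now on we may assume $p=1$.

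With $p=1$, fix the unit normal $\xi$ for which all principal curvatures of $f$ are $\ge 1$. The key geometric point, due to Currier, is that at any point $x$ where the principal curvatures are \emph{strictly} greater than $1$, the hypersurface is locally supported (on the convex side) by the horosphere through $f(x)$ tangent to $f$ there; and where some principal curvature equals $1$ exactly, the limiting object is the horosphere itself. I would then choose, for a point $x_0\in M$, the Beltrami map $\varphi$ of $\h^{n+1}$ normalized so that $\varphi$ sends horospheres through the point at infinity determined by $\xi(x_0)$ to parallel hyperplanes of $\R^{n+1}$; equivalently, use the identity \eqref{eq-sff} (with $p=1$) relating $\alpha_f$ and $\alpha_g$ for $g:=\varphi\circ f$. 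Because $\phi^2>0$, the map $g$ has positive semi-definite second fundamental form, i.e.\ $g$ is locally convex, and moreover $g$ is \emph{strictly} convex at every point of $M$ that maps away from the support horosphere; in particular there is at least one strictly convex (elliptic) point for $g$ unless $f(M)$ is a single horosphere. One also checks that the induced metric on $M$ via $g$ is complete: completeness is a metric statement, and the Beltrami map, while not an isometry, distorts the metric by a factor bounded below on the relevant region once one knows $f(M)$ stays in a half-space (which is where the horosphere support is used).

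Granting local convexity, existence of an elliptic point, and completeness of $g:M^n\to\R^{n+1}$, Sacksteder's Theorem~\ref{th-sacksteder} applies verbatim: $g$ is an embedding onto the boundary of a convex body, $M$ is homeomorphic to $\s^n$ or $\R^n$, and $g$ is rigid. Pulling back through the diffeomorphism $\varphi^{-1}$, which takes convex bodies to convex bodies, gives assertions (i) and (ii) for $f$, and the rigidity of $g$ together with the bijective correspondence of normal fields and the Fundamental Theorem for hypersurfaces of $\h^{n+1}$ gives the rigidity of $f$. Finally, in the noncompact case $M\cong\R^n$: if $f(M)$ were not a horosphere there would be a point with all principal curvatures $>1$, hence a strict support by a horosphere; a maximum-principle / Morse-theoretic argument along the lines of Currier's (sweeping the family of horospheres with a given point at infinity) shows the only complete possibility compatible with the one-point-at-infinity support structure is the horosphere itself, giving (iii). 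The main obstacle I anticipate is the completeness of $g$ in the Euclidean picture: since $\varphi$ is not an isometry, one must argue that $f(M)$ cannot ``escape to the ideal boundary'' of $\h^{n+1}$ in a way that collapses distances under $\varphi$, and this is precisely where the horosphere-support property (the principal-curvature-$\ge 1$ hypothesis) does the essential work; everything else is a recombination of Sacksteder's theorem, the Beltrami-map identity \eqref{eq-sff}, and a standard reduction of codimension.
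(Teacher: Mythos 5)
There are genuine gaps, and they sit exactly at the points where the theorem has real content. First, the reduction of codimension --- which is the new assertion here, the rest being Currier's theorem \cite{currier} once $p=1$ --- is not actually proved in your proposal. The claim that positive definiteness of the $A_{\xi_i}$ ``hence in particular'' gives semi-definiteness of $\alpha_f$ is false: semi-definiteness must hold for \emph{every} normal direction, and for $\xi=(\xi_1-\xi_2)/\sqrt2$ with, say, $A_{\xi_1}={\rm diag}(1,3)$, $A_{\xi_2}={\rm diag}(3,1)$, the operator $A_\xi$ is indefinite. Likewise, the Gauss equation does not yield that the first normal bundle has rank one, and parallelism of $N_1$ is asserted rather than derived; this is precisely the step for which the paper invokes the Beltrami map together with identity \eqref{eq-sff} and a Euclidean reduction-of-codimension theorem (Jonker \cite{jonker}), and it cannot be waved through. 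Second, your application of Sacksteder's Theorem \ref{th-sacksteder} to $g=\varphi\circ f$ ``verbatim'' fails in the noncompact case because $g$ is not complete: the Beltrami map sends $\h^{n+1}$ onto a bounded ball, and a horosphere, for instance, goes to an ellipsoid minus its point of tangency with the boundary sphere, which is not complete in the induced Euclidean metric. Your proposed fix --- that the metric distortion is ``bounded below'' on the relevant region --- is exactly what fails near the ideal boundary, and no half-space confinement repairs it. This is why the paper, following Currier, treats the hypersurface case directly in $\h^{n+1}$, using the horosphere-support property of points with principal curvatures $>1$ and a Morse-theoretic sweep in the style of do Carmo--Lima \cite{docarmo-lima}, rather than transporting completeness to $\R^{n+1}$.

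A further problem is the rigidity transfer. If $f'\colon M\to\h^{n+1}$ is another isometric immersion, then $\varphi\circ f'$ is \emph{not} an isometric immersion of the same Riemannian manifold as $\varphi\circ f$ (the conformal-like distortion of $\varphi$ depends on position), so Sacksteder's congruence theorem in $\R^{n+1}$ says nothing directly about $f$ and $f'$ being congruent in $\h^{n+1}$; the ``Fundamental Theorem'' step needs equality of second fundamental forms, which is what must be proved. In the paper's setting this is obtained either from Currier's theorem itself (which includes rigidity and the horosphere alternative in the noncompact case) or, as in the compact hyperbolic case of Theorem \ref{th-andrade-delima1}, by showing via Beltrami maps and \eqref{eq-sff} that the umbilic set does not disconnect $M$ and then applying Sacksteder's intrinsic rigidity theorem \cite{sacksteder2}. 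In short: your overall strategy (Beltrami map plus Euclidean convexity theory) is the right family of ideas, but the three load-bearing steps --- codimension reduction, completeness of the Euclidean image, and rigidity across the non-isometric map $\varphi$ --- are each either unjustified or false as stated, whereas the paper's route reduces to $p=1$ and then quotes Currier's hypersurface theorem, which was proved intrinsically in $\h^{n+1}$.
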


Let us consider now extensions of  Liebmann's  theorems to
$\mathbb Q_\epsilon^{n+1}.$

In a series of outstanding papers published between 1956 and 1962,
A. Alexandrov studied constant mean curvature hypersurfaces of Euclidean space  $\R^{n+1},$
establishing the fundamental result that such an embedded compact hypersurface
is a totally umbilical $n$-sphere. The ingenious method employed in his proof,
now called \emph{Alexandrov reflection technique}, became one of the most powerful tools
in the study of a large class of hypersurfaces of Riemannian manifolds having
suitable groups of isometries.
(We shall have a glimpse of this phenomenon in the next section.)

Given a constant mean curvature  embedding $f:M^n\rightarrow\R^{n+1},$ with $M$ compact,
the Alexandrov reflection technique consists in
considering an arbitrary  hyperplane $\Pi$ disjoint from
$\Sigma:=f(M),$ which moves towards it until it becomes tangent
at a point $p\in\Sigma.$ Then, it is shown that there exists
a hyperplane  parallel to $T_p\Sigma$ with respect to which
$\Sigma$ is symmetric. Since $\Pi$ is arbitrary,
$\Sigma$ must be a round sphere.

Let us recall that, given an integer $k\in\{0,1,\dots ,n\},$ the $k$-th mean curvature
of a hypersurface $f:M^n\rightarrow\overbar M^{n+1}$ 
is the function
\[
H_k:={{n}\choose{k}}^{-1}\sigma_k(\lambda_1,\dots,\lambda_n),
\]
where $\lambda_1,\dots, \lambda_n$ are the principal curvatures of $f,$ and $\sigma_k$
is defined as
\[
\sigma_k(\lambda_1,\dots,\lambda_n):=\left\{
\begin{array}{cl}
  1 & {\rm if} \,\, k=0. \\ [1ex]
  \displaystyle\sum_{i_1<\cdots <i_r}\lambda_{i_1}\dots \lambda_{i_r} & {\rm if} \,\, 1\le k\le n.
\end{array}
\right.
\]
In particular, $H_1$ is the mean curvature and $H_n$ is  the Gauss-Kronecker curvature.
When $n = 2,$ $H_2 = k_1 k_2$ is also known as the extrinsic curvature or
Gaussian curvature. We will use this last terminology for the sequence
of the paper in cases where $n = 2.$

\begin{definition}
When the $k$-th mean curvature of $f:M^n\rightarrow\overbar M^{n+1}$
is a constant $H_k,$  we say that $f$ is an $H_k$-\emph{hypersurface}.
\end{definition}

In \cite{korevaar}, N. Korevaar showed that
the Alexandrov reflection technique works for $H_k$-hypersurfaces
in hyperbolic space $\h^{n+1}$ or in open hemispheres of $\s^{n+1}.$
By combining his results with Sacksteder and do Carmo-Warner Theorems,
one easily extends both Liebmann's theorems to $\mathbb Q_\epsilon^{n+1}$ as follows
(see also \cite{montiel-ros}).

\begin{theorem}[Generalized Liebmann Theorem] \label{th-generalizedliebmann}
Let $M^n$ be a compact orientable Riemannian manifold. If
$f:M\rightarrow\mathbb Q_\epsilon^{n+1}$ is a locally strictly convex $H_k$-hypersurface
for some $k\in\{1,\dots,n\},$ then $f(M)$ is a  geodesic sphere
of \,$\mathbb Q_\epsilon^{n+1}.$
\end{theorem}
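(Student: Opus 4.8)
The plan is to follow the two-step scheme indicated just before the statement: first extract embeddedness and convexity from the Sacksteder/do Carmo--Warner circle of results, and then run the Alexandrov reflection technique, in the form established by Korevaar \cite{korevaar}, to upgrade a convex $H_k$-hypersurface to a geodesic sphere. We may assume $M$ connected (otherwise each component bounds a convex body, and the argument below shows it is a geodesic sphere). By Bishop's Theorem, local strict convexity of $f$ says exactly that, in a suitable normal direction, the second fundamental form $\alpha_f$ is positive definite; in particular $\alpha_f$ is semi-definite and $f$ is non-totally geodesic. Hence Theorem~\ref{th-sacksteder} applies when $\epsilon=0$ and Theorem~\ref{th-docarmo-warner} applies when $\epsilon=\pm1$, giving that $f$ is an embedding, $M$ is homeomorphic to $\s^n$, and $\Sigma:=f(M)$ bounds a compact convex body $\Omega\subset\mathbb Q_\epsilon^{n+1}$ --- and, when $\epsilon=1$, that $\Sigma$ is contained in an open hemisphere $\mathcal H$ of $\s^{n+1}$. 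This last fact is indispensable, since reflections across totally geodesic hypersurfaces behave like Euclidean reflections only within such a hemisphere.

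Next I would record the ellipticity of the equation $H_k\equiv\mathrm{const}$ along $f$. Since every principal curvature of $f$ is positive, the principal-curvature vector $(\lambda_1,\dots,\lambda_n)$ lies in the positive cone, hence in the G\aa rding cone $\Gamma_k$ on which $\partial\sigma_k/\partial\lambda_i>0$ for all $i$ and $\sigma_k^{1/k}$ is concave. Consequently $H_k=\mathrm{const}$ is a quasilinear elliptic equation along $f$ (uniformly on compact sets), and the tangency principle holds: the strong maximum principle and the Hopf boundary-point lemma apply to the difference of two hypersurfaces both satisfying $H_k=\mathrm{const}$ with principal curvatures in $\Gamma_k$. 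This is precisely the regime in which Korevaar \cite{korevaar} verified that the moving-plane scheme carries over to $H_k$-hypersurfaces of $\h^{n+1}$ and of open hemispheres of $\s^{n+1}$ (the Euclidean case being Alexandrov's original setting); see also \cite{montiel-ros}.

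Now I would carry out the reflection. Fix a direction, i.e.\ a totally geodesic hypersurface $\Pi_0$ of $\mathbb Q_\epsilon^{n+1}$ disjoint from $\Sigma$ (lying in $\mathcal H$ when $\epsilon=1$), and sweep the parallel family $\Pi_t$ toward $\Sigma$. Reflecting across $\Pi_t$ the portion of $\Sigma$ already crossed produces a ``cap'' which, by the embeddedness of $\Sigma$ and the convexity of $\Omega$, has a first contact with the remaining portion of $\Sigma$ either at an interior point or tangentially at a boundary point. In either case both pieces satisfy $H_k=\mathrm{const}$, so the tangency principle of the previous step forces $\Sigma$ to be invariant under reflection across the corresponding $\Pi=\Pi_t$. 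As the direction of the sweep was arbitrary, $\Sigma$ possesses a totally geodesic hypersurface of symmetry orthogonal to every direction; all these hypersurfaces pass through a common point $c$ (the center of mass of $\Omega$), so $\Sigma$ is invariant under the full isotropy subgroup at $c$, whence $\Sigma$ is a geodesic sphere of $\mathbb Q_\epsilon^{n+1}$ centered at $c$.

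The geometric sweeping is routine; the crux lies in the analytic content of the middle step --- ensuring that strict local convexity truly places $f$ in the elliptic cone $\Gamma_k$ for $k\ge2$, and that near a first contact point both the cap and $\Sigma$ can be written as graphs of functions solving the \emph{same} quasilinear elliptic PDE, so that the strong maximum principle and the Hopf lemma are applicable. Both points rely on the embeddedness secured at the outset and on the concavity of $\sigma_k^{1/k}$ on the G\aa rding cone; the details are exactly those of \cite{korevaar}. An alternative to the reflection argument is the integral route through the Minkowski-type (Hsiung) formulas and Newton's inequalities, which also yields the roundness of convex $H_k$-hypersurfaces of $\mathbb Q_\epsilon^{n+1}$ directly (see \cite{montiel-ros}).
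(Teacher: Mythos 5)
Your proposal is correct, but it takes a genuinely different route from the proof the paper actually writes out. Both arguments start the same way: Bishop's theorem turns strict local convexity into positive definiteness of $\alpha_f$, and then Theorem \ref{th-sacksteder} (for $\epsilon=0$) resp. Theorem \ref{th-docarmo-warner} (for $\epsilon=\pm1$) give embeddedness, $M\cong\s^n$, convexity of the enclosed region, and, for $\epsilon=1$, containment in an open hemisphere. From there you run the Alexandrov moving-plane scheme in Korevaar's form \cite{korevaar} --- ellipticity of $\sigma_k$ on the positive cone, tangency principle, one symmetry hypersurface per sweep, all passing through the center of mass --- which is exactly the route the paper \emph{alludes to} in the sentence preceding the statement, but not the proof it gives: since no reference exists for it, the author presents instead a self-contained integral proof. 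With the inward orientation the support function $\mu=\langle P,\xi\rangle$ is negative and $c_\epsilon>0$ on $f(M)$ (the hemisphere containment is used here when $\epsilon=1$); subtracting suitable pairs of Minkowski identities \eqref{eq-minkowski} (the identities for $k=1$ and $k=2$ when the mean curvature is constant, and the identities for $1$ and $k$ after dividing by $H_k^{(k-1)/k}$ when $H_k$ is constant with $k>1$) yields the vanishing of an integral whose integrand is one-signed by \eqref{eq-hk}, and the equality case of \eqref{eq-hk} forces umbilicity pointwise. What each approach buys: yours rests on the PDE machinery (G\aa rding cone, strong maximum principle, Hopf lemma) that you rightly identify as the crux and outsource to \cite{korevaar}; the paper's route avoids maximum principles altogether and is short and elementary once \eqref{eq-minkowski} is available. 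One caveat on your sweep in the spherical case: totally geodesic hypersurfaces of $\s^{n+1}$ are great spheres, so the only one disjoint from the open hemisphere $\mathcal H$ is its boundary equator, and there is no ``parallel family'' of totally geodesic hypersurfaces; the correct sweep (as in Korevaar) rotates equators about a fixed great $(n-1)$-sphere in $\partial\mathcal H$. This is a wording issue rather than a gap, since you defer those details to \cite{korevaar}; note finally that the integral alternative you mention in closing, via Minkowski--Hsiung formulas and \cite{montiel-ros}, is precisely the proof the paper gives.
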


In Euclidean space $\R^3,$ the Liebmann's theorems can also be proved
by means of the celebrated  \emph{Minkowski formulas} (see, e.g., \cite{montiel-ros2}).
In fact, this method can be adapted for proving Theorem \ref{th-generalizedliebmann}.
Since we have no reference for such a proof,  we shall present it here.

We will make use of the  well known
fact that the mean curvature functions $H_k$ satisfy the inequality
(see, e.g.,  \cite{hardyetal} pg. 52)
\begin{equation}\label{eq-hk}
H_k^{1/k}\ge H_{k+1}^{1/(k+1)} \,\,\, \forall k\in\{1,\dots ,n-1\},
\end{equation}
and that the equality occurs if and only if $\lambda_1=\lambda_2=\ldots =\lambda_n.$

Considering polar coordinates $(r,\theta)$ in $\mathbb Q_\epsilon^{n+1},$ define
the \emph{position vector} $P=s_\epsilon(r)\partial_r$\,, where
\[
s_\epsilon(r):=
\left\{
\begin{array}{lcl}
  \sin r & \text{if} & \epsilon =1. \\[1ex]
  r &  \text{if} & \epsilon=0. \\[1ex]
  \sinh r & \text{if} & \epsilon=-1.
\end{array}
\right.
\]
Also, for a
given compact oriented  hypersurface $f:M\rightarrow\mathbb Q_\epsilon^{n+1}$  with
inward unit normal $\xi,$  define the \emph{support function} $\mu=\langle P, \xi\rangle.$
In this setting, the following
Minkowski identity holds (cf. \cite[Theorem 3.2]{albuquerque}):
\begin{equation}\label{eq-minkowski}
\int_M(c_\epsilon H_{k-1}+\mu H_{k})dM=0, \,\,\, k=1,\dots ,n,
\end{equation}
where $c_\epsilon=c_\epsilon(r):=s_\epsilon'(r),$ and $dM$ stands for
the volume element of $M.$
(The identity \eqref{eq-minkowski} differs from the one in \cite[Theorem 3.2]{albuquerque},
where the minus sign replaces the plus sign. This is due
to the fact that the definition of shape operator in \cite{albuquerque} differs from ours by a sign.
More precisely, in \cite{albuquerque}, the shape operator $A_\xi$ is defined as $A_\xi X=\overbar\nabla_X\xi,$ whereas we define it
as $A_\xi X=-\overbar\nabla_X\xi.$)

\begin{proof}[Proof of Theorem \ref{th-generalizedliebmann}]
  By Sacksteder and do Carmo--Warner Theorems, $f$ is an embedding and
  $M$ is homeomorphic to $\s^n.$
  We can assume $f$ with the inward orientation, so that the support function
  $\mu$ is negative on $M.$
  Also, for $\epsilon=1,$
  $f(M)$ is in an open hemisphere of $\s^{n+1},$ which implies that, for  suitable polar coordinates
  in $\s^{n+1},$ one has $c_1(r)=\cos r>0$ on $f(M).$ For $\epsilon\in\{0,-1\},$
  it is clear that $c_\epsilon >0.$

  Let us consider first the case where $f$ is an $H$-hypersurface. Considering
  the Minkowski equalities for $k=1$ (multiplied by the constant $H>0$) and
  for $k=2,$ we have that
  \[
  \int_M(c_\epsilon H+\mu H^2)dM=0 \quad\text{and}\quad \int_M(c_\epsilon H+\mu H_{2})dM=0.
  \]
  Subtracting these equalities, we get
  \[
  \int_M\mu (H^2-H_2)dM=0.
  \]

  By \eqref{eq-hk}, the integrand in this last integral is non positive. Therefore,
  $H^2=H_2$ on $M,$  which implies that $f$ is totally umbilical, i.e.,  $f(M)$ is a geodesic sphere of
  $\mathbb Q_\epsilon^{n+1}.$

  Now, assume that $f$ is an $H_k$-hypersurface for
  $1<k\le n.$ Multiplying \eqref{eq-minkowski} by $1/H_k^{(k-1)/k}$ and considering that equality also
  for $k=1,$ we get
  \[
  \int_M(c_\epsilon +\mu H)dM=0 \quad\text{and}\quad \int_M\left(c_\epsilon {H_{k-1}}/{H_k^{(k-1)/k}}+\mu H_{k}^{1/k}\right)dM=0.
  \]
  As before, subtracting these equalities, we have
  \[
   \int_M\left(c_\epsilon({H_k^{(k-1)/k}-H_{k-1}})/{H_k^{(k-1)/k}}+\mu (H-H_k^{1/k})\right)dM=0.
  \]

  Again, by  \eqref{eq-hk},  each summand
  in the integrand is non positive. So,
  \[
  H_k^{(k-1)/k}-H_{k-1}=H-H_k^{1/k}=0,
  \]
  giving that $f$ is totally umbilical. This finishes the proof.
\end{proof}

Alternate proofs of Hilbert-Liebmann Theorem in $3$-space forms can be found in
\cite{aledoetal, galvez}.

\section{Convex hypersurfaces of  Riemannian manifolds}

We  continue the considerations of the previous section by presenting  rigidity results
for  locally convex hypersurfaces in Riemannian manifolds of (possibly)
nonconstant sectional curvature. We start with the following
Hadamard-type theorem  due to S. Alexander. (Recall that
a Riemannian manifold $\overbar M^{n+1}$ is called a \emph{Hadamard manifold}
if it is complete, simply connected, and has  nonnegative sectional curvature.)

\begin{theorem}[Alexander \cite{alexander}] \label{th-alexander}
Let $\overbar M^{n+1}$ be a Hadamard manifold. Suppose that $f:M^n\rightarrow\overbar M^{n+1}$  is an
oriented, compact, connected and locally convex hypersurface. Then,  $M$ is diffeomorphic to $\s^n,$
$f$ is an embedding, and $f(M)$ is the boundary of a  convex body in $\overbar M^{n+1}.$
\end{theorem}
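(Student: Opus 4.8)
The plan is to adapt the Morse-theoretic strategy of do Carmo and Lima (used in the proof of Sacksteder's Theorem) to the Hadamard-manifold setting, replacing linear height functions of $\R^{n+1}$ by distance functions to fixed points of $\overbar M^{n+1}$. First I would fix a point $q\in\overbar M^{n+1}$ far from $\Sigma:=f(M)$ and consider the function $h_q:=\overbar\rho(q,\cdot)\circ f:M\to\R$, where $\overbar\rho$ is the Riemannian distance in $\overbar M^{n+1}$; since $\overbar M^{n+1}$ is Hadamard, $\overbar\rho(q,\cdot)$ is smooth away from $q$ and (by the Hessian comparison theorem with the nonnegative curvature bound) strictly convex on $\overbar M^{n+1}-\{q\}$. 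The critical points of $h_q$ occur exactly where the geodesic from $q$ to $f(x)$ is normal to $f$, and at such a point the Hessian of $h_q$ is the sum of the (positive semi-definite) Hessian of the ambient distance restricted to $T_xM$ and the second fundamental form term $-\langle\alpha_f,\nabla\overbar\rho\rangle$; by local convexity of $f$, in the appropriate orientation this second term is also semi-definite of the correct sign, so every critical point of $h_q$ is a local minimum or a degenerate minimum — in particular $h_q$ has no critical point of index between $1$ and $n-1$.

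Next I would run the standard argument: a continuous function on a compact manifold attains its maximum and minimum, and at these points the sublevel/superlevel sets are cells; since there are no intermediate-index critical points, a Morse-theory (or, to handle possible degeneracy, a Lusternik--Schnirelmann / direct deformation-retract) argument forces $M$ to be homeomorphic — in fact, by smoothing, diffeomorphic — to $\s^n$. This is essentially the same dichotomy do Carmo--Lima obtained, except the noncompact alternative cannot occur here because $M$ is assumed compact. Embeddedness then follows by showing $f$ is injective: a local-convexity argument (each point has a supporting geodesic hypersurface, via the normal exponential graph description from Section~\ref{sec-preliminaries}) together with the sphere topology rules out self-intersections, much as in Hadamard's original proof; alternatively one invokes the characterization in Remark~\ref{rem-convexbody}, which in a Hadamard manifold identifies complete embedded locally convex hypersurfaces with boundaries of convex bodies.

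The convex-body conclusion is then obtained by the adaptation of Heijenoort's argument mentioned in Remark~\ref{rem-convexbody}: once $f$ is an embedding, $\Sigma$ separates $\overbar M^{n+1}$ into two components (since $M\cong\s^n$ is a separating hypersurface in a simply connected manifold), and local convexity propagates to global convexity of the bounded component by a continuity/connectedness argument — the set of points of $\partial\Omega$ admitting a supporting totally geodesic hypersurface is open and closed. The main obstacle I expect is the possible \emph{degeneracy} of the critical points of $h_q$: local convexity gives only semi-definiteness, not definiteness, so $h_q$ need not be Morse, and one must either perturb $q$ (or the metric) to achieve nondegeneracy, or argue directly that a convex function composed with a locally convex immersion has only "minimum-type" critical points in a sense strong enough to control the topology — this is the technical heart, and it is where the Hadamard (nonpositive-curvature... here nonnegative, i.e.\ Hessian $\ge$ metric-comparison) hypothesis on $\overbar M^{n+1}$ is genuinely used.
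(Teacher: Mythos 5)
Your plan is a genuinely different route from the one the paper follows (Alexander's own proof uses no Morse theory: it encloses $f(M)$ in a large totally convex geodesic ball $B$, shows via Warner's extension of the Rauch comparison theorem that the outward normal geodesics of $f$ have no focal points, so that the endpoint map $P:M\rightarrow\partial B$ is a diffeomorphism --- giving $M\cong\s^n$ at once --- and then gets embeddedness from a tangency argument with the parallel hypersurfaces $f_t$, the contradiction being the injectivity of $P$). But as it stands your plan has a genuine gap at its central step: the claim that every critical point of $h_q=\overbar\rho(q,\cdot)\circ f$ is of minimum type, hence that there are no critical points of intermediate index. At a critical point the radial direction $\overbar\nabla\overbar\rho$ equals $\pm\xi$, where $\xi$ is the unit normal in which $\alpha_f$ is positive semi-definite, and you cannot choose this sign ``in the appropriate orientation'': it is forced by the geometry (at the point of $M$ farthest from $q$ it is $-\xi$). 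When $\overbar\nabla\overbar\rho=-\xi$ the Hessian of $h_q$ is $\mathrm{Hess}\,\overbar\rho|_{TM}-\langle A_\xi\,\cdot,\cdot\rangle$, a difference of two positive semi-definite forms, which can be indefinite of any index; perturbing $q$ does not remove this, and, unlike the Euclidean height functions of do Carmo--Lima (whose ambient Hessian vanishes identically), for distance or Busemann functions in a general Hadamard manifold the comparison term cannot be made to disappear. (Incidentally, the convexity of $\overbar\rho(q,\cdot)$ that you invoke needs the \emph{nonpositive} curvature bound in the Hessian comparison, the paper's parenthetical definition notwithstanding.) So the topological heart --- excluding intermediate-index and degenerate critical behavior --- is exactly what do Carmo--Lima's global analysis of the singular set supplies in $\R^{n+1}$ and what your outline leaves unproved; note also that a Reeb-type argument yields only a homeomorphism with $\s^n$, while the theorem asserts a diffeomorphism, which Alexander's map $P$ delivers directly and ``by smoothing'' does not.

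The embeddedness step is also not an argument yet: an immersed topological sphere can certainly self-intersect, so ``sphere topology rules out self-intersections'' proves nothing, and Remark~\ref{rem-convexbody} cannot be used here, since it concerns complete \emph{embedded} locally convex hypersurfaces --- invoking it to obtain embeddedness is circular. What is needed is a global tangency (maximum-principle type) argument, such as the paper's: if $f(x)=f(y)$ with $x\ne y$, push a neighborhood of $x$ along the outward normal geodesics until the parallel hypersurface $f_{t_0}$ is tangent to $f(M)$ from the convex side; local convexity forces the outward normals to agree at the tangency, so $P(x_0)=P(y_0)$, contradicting the bijectivity of $P$. Some substitute for this step (or for Heijenoort's argument) must be built into your approach before the convex-body conclusion can be drawn.
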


Let us outline the elegant proof  Alexander provided for her theorem.

Since $M$ is compact, there exists an open geodesic ball $B$ containing
$f(M)$ which is totally convex, for $\overbar M^{n+1}$ is a Hadamard manifold. Then, setting
$\xi\in TM^\perp$ for the outward unit normal to $f,$
for each $x\in M,$ the geodesic $\gamma_x$ of $\overbar M^{n+1}$ issuing from $x$ with velocity
$\xi(x)$ intersects $\partial B$ transversely at a point $P(x),$ defining a map $P:M\rightarrow\partial B.$
By an extension of the Rauch comparison theorem, due to Warner \cite{warner}, $M$ has no focal points on $\gamma_x,$
so that $P$ is, in fact, a diffeomorphism. In particular, $M$ is diffeomorphic to $\s^n.$

Now, assuming that $f$ is not an embedding, consider the one-parameter family of
parallel hypersurfaces $f_t:M^n\rightarrow\overbar M^{n+1}$ given by
\[
f_t(x)=\exp_{\overbar M}(f(x),t\xi(x)), \,\,\, t\in[0, +\infty).
\]

The local convexity of $f$ and the Bishop Theorem imply that the second fundamental
form $\alpha_f$ is negative semi-definite in the outward normal direction $\xi.$
Also,  a direct computation gives that the principal curvatures of $f_t$ are decreasing
functions of $t,$  so that, for $t>0,$  the second fundamental form of $f_t$ is
negative semi-definite as well.

\begin{figure}[htbp]
\begin{center}
\includegraphics{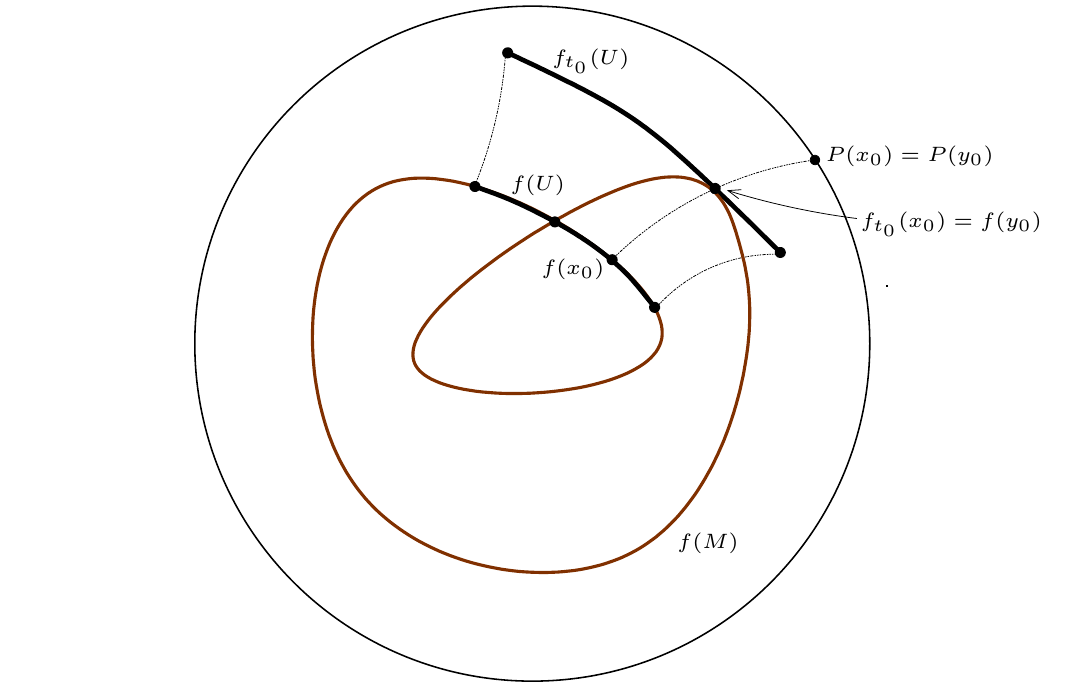}
\caption{\small Proof of Alexander Theorem.}
\label{fig-alexander}
\end{center}
\end{figure}

Now, for a suitable $x\in M$ satisfying  $f(x)=f(y),$ $x\ne y\in M,$ there exists an open neighborhood
$U\subset M$ of $x$ in $M$  such that $f_t(U)$ intersects $f(M)$ for small $t$, and is disjoint from
$f(M)$ for a sufficiently large $t.$ Hence, for some $t_0>0,$ $f_{t_0}(U)$ is tangent to $f(M)$ at a
point $f_{t_0}(x_0)=f(y_0),$ $x_0, y_0\in M$ (Fig. \ref{fig-alexander}). Moreover, the local convexity of
$f_t$ and $f_{t_0}$ gives that their outward unit normal vectors at  $x_0$ and $y_0$ coincide.
In particular, $P(x_0)=P(y_0),$ which contradicts the bijectivity of $P.$ Thus, $f$ is an embedding and
$f(M)$ is the boundary of  a convex body of $\overbar M$ (cf. Remark \ref{rem-convexbody}), which concludes the proof.

In what concerns rigidity of convex surfaces of homogeneous $3$-manifolds, one of the
major results obtained was the following Hadamard-Stoker type theorem by
J. M. Espinar, J. A. G\'alvez, and H. Rosenberg. (We keep their terminology, considering
surfaces as subsets rather than isometric immersions.)

\begin{theorem}[Espinar -- G\'alvez -- Rosenberg \cite{esp-gal-rosen}] \label{th-esp-gal-rosen}
Let $\Sigma$ be a complete connected immersed surface with positive Gaussian curvature in
$\h^2\times\R.$ Then, $\Sigma$ is properly embedded and bounds a convex body in
$\h^2\times\R.$ Moreover, $\Sigma$ is homeomorphic to $\s^2$ or $\R^2.$
In the latter case, $\Sigma$ is a graph over a convex domain of $\h^2\times\R$  or
$\Sigma$ has a simple end.
\end{theorem}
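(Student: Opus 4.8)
The plan is to follow the strategy that underlies the Euclidean Hadamard--Stoker theorem and its extension by do Carmo--Lima, adapted to the product ambient $\h^2\times\R$: exploit the ambient foliation by slices $\h^2\times\{t\}$ as a Morse-theoretic ``height'' family together with the convexity produced by positive extrinsic curvature, and then split into the compact and noncompact cases according to the behaviour of the height function $h=\langle\cdot,\partial_t\rangle$ on $\Sigma$. First I would recall that positive Gaussian curvature (here $K_{\rm ext}>0$, i.e.\ the product of the principal curvatures of $\Sigma$ relative to a unit normal) forces $\Sigma$ to be locally strictly convex: near each point $\Sigma$ lies, except for that point, strictly to one side of the tangent plane, and in particular the shape operator is definite there. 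The first genuine step is to upgrade this local statement to a global one --- that $\Sigma$ is \emph{embedded} and bounds a convex body --- by a continuity/monotonicity argument in the spirit of Alexander's proof of Theorem~\ref{th-alexander}: take a mean-convex domain (a large half-space $\h^2\times(-\infty,a]$, or a geodesic ball when $\Sigma$ is compact), translate the bounding slice/sphere toward $\Sigma$ until first contact, and use the maximum principle for the extrinsic-curvature equation together with strict convexity to propagate the ``one-sidedness'' from the contact point to all of $\Sigma$; this rules out self-intersections and shows $\Sigma=\partial\Omega$ for a convex $\Omega$, hence proper embeddedness.

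With embeddedness in hand, I would analyse the restriction to $\Sigma$ of the height function $h$. Because $\Sigma$ is convex and $K_{\rm ext}>0$ everywhere, every critical point of $h$ on $\Sigma$ is nondegenerate of index $0$ or $2$ (the Hessian of $h$ on $\Sigma$ is, up to sign, the second fundamental form, which is definite), so $h$ is a Morse function with no saddles. If $\Sigma$ is compact, $h$ attains a max and a min, has exactly one of each (two local maxima or two local minima would, together with absence of saddles, violate Morse theory / the structure of a convex body), and standard Morse theory then gives $\Sigma\cong\s^2$. If $\Sigma$ is noncompact, I would study the slices $\Sigma_t=\Sigma\cap(\h^2\times\{t\})$: for $t$ a regular value these are convex curves in $\h^2$ (convexity of $\Omega$ intersected with a totally geodesic slice, plus $K_{\rm ext}>0$ to get strict convexity), and the topology of $\Sigma$ is determined by how this family of convex curves degenerates. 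Either $h$ is proper and has a single critical point (a ``bottom'' or ``top''), in which case $\Sigma$ is, via the convex-body structure, a graph $t=u(x)$ over a convex domain $D\subset\h^2$ --- this is the graph alternative; or $h$ fails to be proper/bounded from one side, and a Gauss--Bonnet or total-curvature estimate on the convex slices $\Sigma_t$ (using $\int_\Sigma K_{\rm ext}\le$ something finite, or that the asymptotic boundary of $\Omega$ in $\partial_\infty\h^2\times\R$ is a single point) forces the end to be ``simple'' --- asymptotic to a single vertical line or to $\partial_\infty\h^2\times\{-\infty\}$ type behaviour, which is the ``simple end'' alternative. The dichotomy comes precisely from whether the projection of $\Omega$ to the $\R$-factor is all of $\R$ or a half-line.

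The main obstacle I expect is the noncompact case, and specifically controlling the asymptotic geometry of the convex body $\Omega$ to produce the clean ``graph vs.\ one simple end'' alternative. Ruling out saddles and getting $\s^2$ in the compact case is routine; embeddedness via the maximum principle is standard once the right mean-convex barrier is chosen. But showing that a complete noncompact convex surface of positive extrinsic curvature in $\h^2\times\R$ cannot have two ends, cannot have a ``cylindrical'' end that spreads out over all of $\h^2$ while staying positively curved, etc., requires a quantitative argument: one needs curvature estimates (e.g.\ a Heinz-type bound on $K_{\rm ext}$ over large geodesic balls, or an analysis of the asymptotic cone of $\Omega$ in the compactification $\overline{\h^2}\times\R$) to conclude that at most one end is ``large'' and that it is either the graphical end over a full convex domain or a thin end asymptotic to a vertical geodesic. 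Here the hyperbolic factor genuinely matters --- the analogous statement in $\R^2\times\R=\R^3$ is exactly Stoker's theorem, but in $\h^2\times\R$ the exponential volume growth of $\h^2$ is what creates the extra ``simple end'' possibility absent from the Euclidean picture, and pinning down its shape is the crux.
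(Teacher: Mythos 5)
There are two genuine gaps in your sketch, and both sit exactly where the theorem is hard. First, your embeddedness step does not work. Bringing a slice $\h^2\times\{a\}$ (or a large geodesic sphere) up to a first contact with $\Sigma$ produces no contradiction and no information: a surface of positive extrinsic curvature is perfectly allowed to touch a totally geodesic slice or a convex barrier (every compact example does), and the maximum principle compares two surfaces near a tangency --- it says nothing about self-intersections of a single immersed surface. Alexander's argument (Theorem \ref{th-alexander}), which you invoke in spirit, is genuinely different: it uses compactness, the diffeomorphism $P:M\rightarrow\partial B$ coming from the absence of focal points, and the outward parallel hypersurfaces; none of this is available for a complete noncompact immersion. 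In the Espinar--G\'alvez--Rosenberg proof the logical order is the opposite of yours: embeddedness and the topology are \emph{outputs} of a global foliation argument, and only afterwards does the convex-body statement follow from the van Heijenoort-type result recalled in Remark \ref{rem-convexbody}.

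Second, you foliate by horizontal slices, while the actual proof foliates by parallel \emph{vertical} planes $\Gamma_s=\gamma_s\times\R$ and splits according to whether $\Sigma$ has a vertical point (a point where $T_p\Sigma$ contains $\partial_t$): if there is none, each vertical plane meets $\Sigma$ in a noncompact embedded strictly convex curve and $\Sigma$ is shown to be a vertical graph over a convex domain of $\h^2$; if a vertical point exists, one studies the compact curves $C(s)=\Sigma\cap\Gamma_s$ and a delicate Morse-theoretic analysis yields the alternatives $\s^2$, $\R^2$, or $\R^2$ with a simple end. Your horizontal slices $\Sigma_t=\Sigma\cap(\h^2\times\{t\})$ cannot reproduce this: the level sets of the height function need not be compact (the simple-end surfaces are foliated by horizontal \emph{horocycles}, so $h$ has noncompact levels and possibly no critical points at all), a closed locally convex curve in $\h^2$ need not be embedded (so even compact levels do not immediately give embedded convex curves for an immersed surface), and the dichotomy ``$h$ proper with one critical point versus not'' does not match the theorem's dichotomy ``vertical graph versus simple end.'' Your Hessian computation ($\mathrm{Hess}\,h=\nu\langle A\,\cdot,\cdot\rangle$, definite at critical points) is correct and the compact case is fine once embeddedness is known, but at the two crucial points --- embeddedness of a complete immersion and the classification of the noncompact picture --- the sketch substitutes appeals to ``the maximum principle'' and ``total-curvature estimates'' for an argument, which is precisely what is missing.
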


By $\Sigma$ having a simple end means that it has the following  properties:

\begin{itemize}
  \item The asymptotic boundary of the vertical projection  of $\Sigma$
  over $\h^2$ is a singleton $\{p_\infty\}.$
  \item Given a complete geodesic $\gamma$ in $\h^2$ whose ``endpoints at infinity'' are distinct from
  $p_\infty,$ the intersection of the \emph{vertical plane} $\Gamma:=\gamma\times\R$ with $\Sigma$ is either
  empty or compact.
\end{itemize}

An explicit parametrization of a surface $\Sigma$ in $\h^2\times\R$ with positive Gaussian
curvature and one simple end was provided in \cite[Proposition 4.1]{esp-gal-rosen}. Such
a $\Sigma$ is foliated by horizontal horocycles whose vertical projections on $\h^2$ have the
same center $p_\infty$ in the asymptotic boundary $\partial_\infty\h^2$ of the hyperbolic
plane $\h^2$ (Fig. \ref{fig-simpleend}).

\begin{figure}[htbp]
\begin{center}
\includegraphics{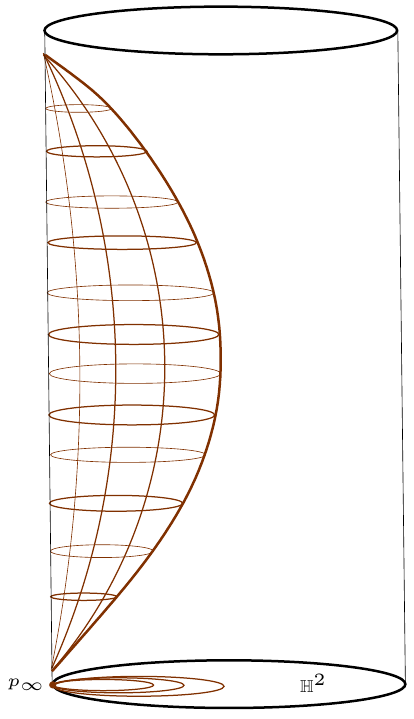}
\caption{\small A strictly convex surface in $\h^2\times\R$ with one simple end.}
\label{fig-simpleend}
\end{center}
\end{figure}

The proof of Theorem \ref{th-esp-gal-rosen} given in \cite{esp-gal-rosen} is divided into two parts.
First, it is assumed that $\Sigma$ has no vertical points, meaning that
there is no $p\in\Sigma$ with  $T_p\Sigma$ parallel to the vertical direction $\partial_t.$
In this case, the intersection of a vertical plane $\Gamma=\gamma\times\R$ with  $\Sigma$ is a
curve which is neither compact nor self-intersecting, since in either of these events $\Sigma$ would
necessarily have a vertical point. In addition, the strict convexity of $\Sigma$
gives that any such embedded curve is strictly convex, that is, has positive geodesic curvature.

\begin{figure}[htbp]
\begin{center}
\includegraphics{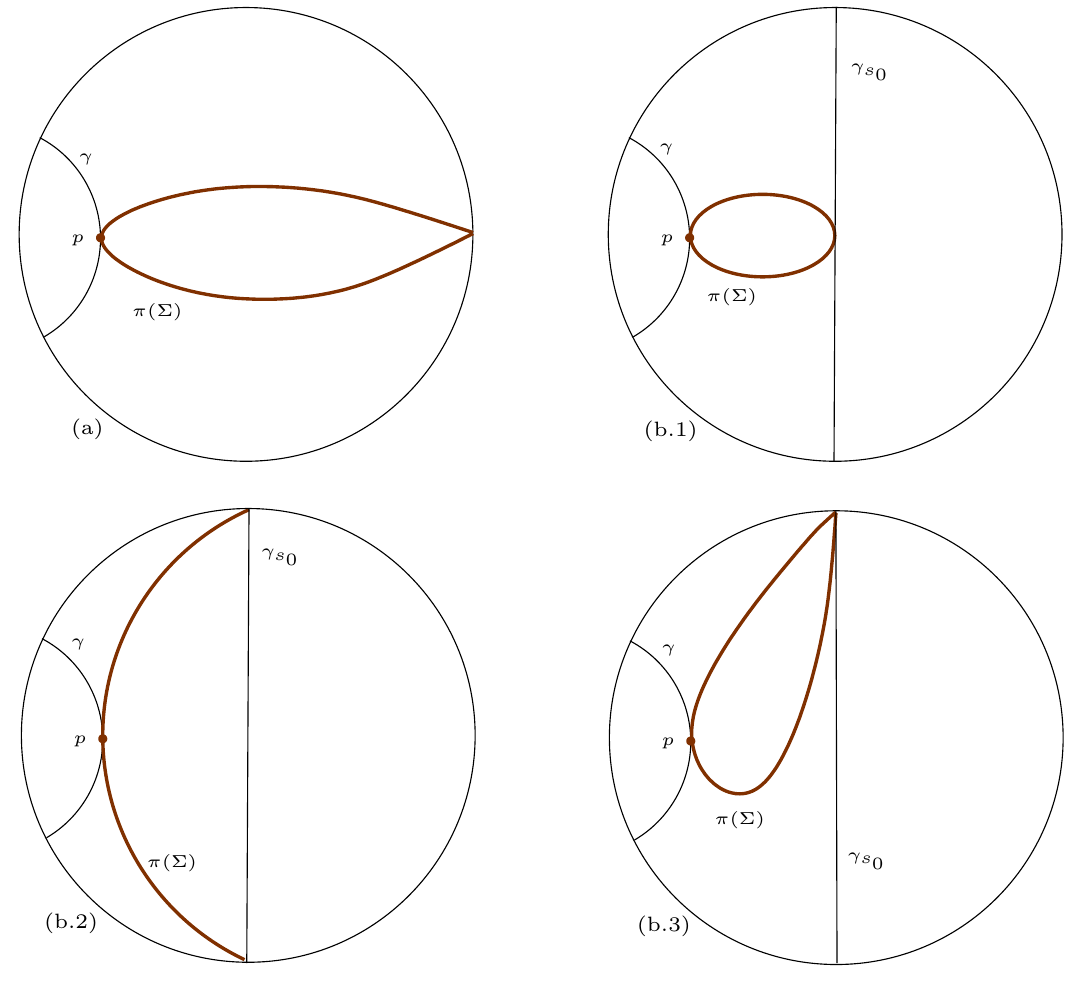}
\caption{\small The projection $\pi(\Sigma)$ in the four cases of the proof of Theorem \ref{th-esp-gal-rosen}.}
\label{fig-esp-gal-rosenProof}
\end{center}
\end{figure}

Now, fix  a geodesic $\gamma^\perp=\gamma^\perp(s)$ of $\h^2,$ orthogonal to $\gamma$ at a point
$q\in\gamma\cap\gamma^\perp,$
and a family $\gamma_s$ of parallel geodesics orthogonal to $\gamma^\perp$ with
$\gamma_0=\gamma.$ Setting $\Gamma_s=\gamma_s\times\R,$ one has that the  intersections
$\Sigma\cap\Gamma_s$ are either empty or embedded vertical graphs over $\gamma_s.$ From this,
one easily concludes that $\Sigma$ is a graph over a convex open set of $\h^2.$ (Notice that
this graph is entire if $\Gamma_s\cap\Sigma$ is nonempty for all $s\in\R.$)

In the second part of the proof, it is assumed that there exists a
vertical point $p\in\Sigma.$ Under this assumption, writing $\Gamma$ for the vertical plane tangent to
$\Sigma$ at $p,$ and $\Gamma_s$ for the  family of parallel vertical planes with
$\Gamma_0=\Gamma,$ one has that
$C(s):=\Gamma_s\cap\Sigma$ is nonempty, embedded and homeomorphic to $\s^1$ for $s>0$ sufficiently
small. Then, a rather delicate reasoning via Morse Theory leads to the conclusion that one of
the following  possibilities occurs as  $s\rightarrow+\infty$ (see Fig. \ref{fig-esp-gal-rosenProof}):

\begin{itemize}
  \item[a)] $C(s)$ is   homeomorphic to $\s^1$ for all $s>0$ --- in which case
  $\Sigma$ is homeomorphic to $\R^2$ and has a simple end.  
  \item[b)] $C(s)$ is nonempty and homeomorphic to $\s^1$ for all $s\in (0,s_0),$ and is empty for $s>s_0$.
  Then, as $s\rightarrow s_0,$ one of the following occurs:
  \subitem b.1)
   The curves $C(s)$ converge to a point $p\in\Sigma$   --- in which case $\Sigma$ is homeomorphic to $\s^2.$
  \subitem b.2) The curves $C(s)$ converge to a curve in the asymptotic boundary of $\h^2\times\R$ ---  in which case
  $\Sigma$ is homeomorphic to $\R^2$.
  \subitem b.3) The vertical projections $\pi(C(s))$ of  $C(s)$ converge to a point
  in $\partial_\infty\h^2$   --- in which case, as in (a),
  $\Sigma$ is homeomorphic to $\R^2$ and has a simple end.
  \end{itemize}

Finally, since $\Sigma$ is embedded and locally strictly convex (and $\h^2\times\R$ is a Hadamard manifold),
it is necessarily the boundary of a convex body  in $\h^2\times\R$ (cf. Remark \ref{rem-convexbody}).

The main results in \cite{esp-gal-rosen} also include horizontal and vertical height
estimates for compact graphs of constant
Gaussian curvature  with boundary in  vertical and horizontal planes, respectively.
These estimates allow one to prove that a complete
surface $\Sigma$ in $\h^2\times\R$ with positive constant Gaussian curvature cannot
be homeomorphic to $\R^2.$ This, together with Theorem \ref{th-esp-gal-rosen}, gives that
such a $\Sigma$ is embedded and homeomorphic to $\s^2.$ Then, applying Alexandrov reflections on
$\Sigma$ with respect to vertical hyperplanes $\Gamma=\gamma\times\R,$ one easily concludes that
$\Sigma$ must be a rotational sphere. Therefore, the following extension of the Hilbert-Liebmann Theorem
for complete (rather than compact!) surfaces of $\h^2\times\R$ holds.

\begin{theorem}[Espinar -- G\'alvez -- Rosenberg \cite{esp-gal-rosen}] \label{th-esp-gal-rosen02}
A complete surface of $\h^2\times\R$ with positive constant Gaussian curvature is a rotational sphere.
\end{theorem}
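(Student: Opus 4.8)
The plan is to combine the Hadamard--Stoker type dichotomy of Theorem~\ref{th-esp-gal-rosen} with the height estimates for constant Gaussian curvature graphs in order to force $\Sigma$ to be compact, and then to run an Alexandrov reflection argument with respect to the vertical planes $\Gamma=\gamma\times\R$. First, since $\Sigma$ is complete and has constant positive Gaussian curvature, Theorem~\ref{th-esp-gal-rosen} applies: $\Sigma$ is properly embedded, bounds a convex body of $\h^2\times\R$, and is homeomorphic either to $\s^2$ or to $\R^2$; in the latter case it is a vertical graph over a convex domain $\Omega\subseteq\h^2$, or it has a simple end.

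Second, I would discard the case $\Sigma\cong\R^2$; this is the only place where constancy --- rather than mere positivity --- of the curvature is used, and it is carried out by means of the horizontal and vertical height estimates of \cite{esp-gal-rosen}. The idea is that, were $\Sigma$ noncompact, one could extract from it compact graphical pieces of constant Gaussian curvature whose boundaries lie on horizontal planes $\h^2\times\{t_0\}$ (superlevel and sublevel sets of the height function, in the graph subcase) or on vertical planes $\gamma\times\R$ (cross-sections transverse to the end, in the simple-end subcase); the uniform vertical and horizontal bounds supplied by the estimates then prevent $\Sigma$ from extending arbitrarily far, contradicting its non-compactness. Hence $\Sigma$ is compact, and by Theorem~\ref{th-esp-gal-rosen} it is an embedded, locally strictly convex topological sphere bounding a convex body.

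Third --- the heart of the matter --- with $\Sigma$ a compact embedded strictly convex sphere of constant extrinsic curvature $H_2=c>0$, I would apply Alexandrov's reflection technique using vertical planes. Reflection across any vertical plane $\gamma\times\R$ is an isometry of $\h^2\times\R$, namely the product of the geodesic reflection of $\h^2$ across $\gamma$ with the identity on $\R$. Fixing a foliation of $\h^2\times\R$ by parallel vertical planes $\Gamma_s$, one starts with $\Gamma_s$ disjoint from $\Sigma$ and slides it toward $\Sigma$; at the first interior tangency or first non-transverse intersection of the reflected cap with the opposite part of $\Sigma$, the maximum principle --- valid because the operator $H_2=c$ is elliptic along the locally strictly convex $\Sigma$ --- forces $\Sigma$ to be invariant under reflection across that plane $\Gamma_{s_0}$. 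Carrying this out for every direction of $\gamma$ produces a symmetry plane of $\Sigma$ for each direction. Now $\pi(\Sigma)\subseteq\h^2$ is a bounded convex set preserved by each of these geodesic reflections, so its circumcenter $o\in\h^2$ (well defined, since $\h^2$ is uniquely geodesic) lies on every symmetry geodesic; hence all the symmetry planes contain the vertical line over $o$, and the reflections across them generate the group of rotations about that line. Therefore $\Sigma$ is rotationally invariant, i.e., a rotational sphere.

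The step I expect to be the main obstacle is the second one: turning ``$\Sigma\not\cong\R^2$'' into a rigorous argument requires, in each of the two non-compact shapes allowed by Theorem~\ref{th-esp-gal-rosen}, a careful extraction of compact graphical pieces whose boundaries lie exactly on the horizontal or vertical planes figuring in the hypotheses of the height estimates, together with a controlled limiting argument. Once compactness is secured, the reflection argument is essentially routine; the only points deserving care there are the ellipticity of the constant-$H_2$ equation along the convex surface $\Sigma$ (which legitimizes both the interior and the boundary Hopf maximum principles applied at the first contact) and the elementary observation that the symmetry planes obtained in the various directions share a common vertical axis.
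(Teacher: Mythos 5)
Your proposal follows essentially the same route as the paper: first Theorem~\ref{th-esp-gal-rosen} together with the horizontal and vertical height estimates of \cite{esp-gal-rosen} to exclude the possibility that $\Sigma$ is homeomorphic to $\R^2$, and then Alexandrov reflections with respect to the vertical planes $\Gamma=\gamma\times\R$ to show the resulting compact embedded convex sphere is rotational. The paper's own argument is a sketch at the same level of detail, so your proposal is correct and matches it.
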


Regarding the above theorem, it should be mentioned that, in \cite{esp-gal-rosen}, it was proved that there exists a
unique (up to ambient isometries) rotational sphere with constant Gaussian curvature
$K$ in $\h^2\times\R$ for any $K>0.$ Such a surface  also exists in $\s^2\times\R,$ as shown by
Cheng and Rosenberg \cite{cheng-rosenberg}. Based on this fact, and by means of Alexandrov reflections,
Espinar and Rosenberg \cite{esp-rosen}  proved that Theorem \ref{th-esp-gal-rosen02} is valid in $\s^2\times\R$ as well.
Nevertheless, a proof free of Alexandrov reflections (for both cases) was also provided in \cite{esp-gal-rosen}.

The method employed in the proof of Theorem \ref{th-esp-gal-rosen} can be adapted to
establish Hadamard--Stoker type theorems in other contexts. Relying on this fact,
I. Oliveira and S. Schweitzer extended Theorem \ref{th-esp-gal-rosen} to $\h^n\times\R,$ $n\ge 2.$
In this higher dimensional setting, Espinar and Rosenberg \cite{esp-rosen} managed to prove a Hadamard--Stoker type
theorem for properly immersed, locally strictly convex hypersurfaces of $M^n\times\R,$ where $M^n$ is an arbitrary
compact Riemannian manifold with (1/4)-pinched sectional curvature.
Finally, a Hadamard--Stoker type theorem was  obtained by
Espinar and Oliveira \cite{esp-oliv} for certain locally strictly convex surfaces
immersed in  Killing submersions over  Hadamard surfaces.

Inspired by these works, we obtained a Hadamard--Stoker type theorem for immersed locally
strictly convex hypersurfaces of $\overbar M^n\times\R,$ $n\ge 3,$ where $\overbar M^n$  is either
a Hadamard manifold or the unit sphere $\s^n.$ Before stating it, let us  recall that
the height function of a hypersurface  $f:M^n\rightarrow\overbar M^n\times\R$ is the restriction
to $f(M)$ of the projection $\pi_\R$ of  $\overbar M^n\times\R$ on its second factor $\R.$

\begin{theorem}[de Lima \cite{delima}] \label{th-delima}
For $n\ge 3,$ let  $f:M^n\rightarrow\overbar M^n\times\R$  be a
complete connected oriented hypersurface
with positive definite second fundamental form, where $\overbar M^n$  is either
a Hadamard manifold or the unit sphere $\s^n.$ Then, if the  height function  of  $f$
has a critical point, $f$ is a proper embedding  and $M$ is either homeomorphic  to $\s^n$ or $\R^n.$
In particular, $f(M)$  bounds a convex body of $\overbar M^n\times\R$
in the case $\overbar M$ is a Hadamard manifold.
\end{theorem}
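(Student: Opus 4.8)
The plan is to adapt, to the higher-dimensional product setting, the Hadamard--Stoker technique of Espinar--G\'alvez--Rosenberg behind Theorem \ref{th-esp-gal-rosen}, using the horizontal slices $P_t:=\overbar M^n\times\{t\}$ as the family of ``moving hyperplanes,'' in the spirit of the do Carmo--Lima argument sketched after Theorem \ref{th-sacksteder}. First I would study the height function $h=\pi_\R\circ f$ as a Morse function: at a critical point $q$ of $h$ the tangent space $f_*(T_qM)$ is horizontal, hence equals $T(P_{h(q)})$, so $\pi_{\overbar M}\circ f$ is a local diffeomorphism near $q$ and the Hessian of $h$ at $q$ is $\langle\xi(q),\partial_t\rangle$ times the second fundamental form of $f$; since $\alpha_f$ is positive definite, every critical point of $h$ is nondegenerate, of index $0$ or $n$, i.e.\ a strict local minimum or maximum. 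Normalizing so that the given critical point $p_0$ is a local minimum with $h(p_0)=0$, a neighbourhood of $f(p_0)$ in $f(M)$ lies in $\{t\ge 0\}$ and is the vertical graph $t=\phi(x)$ of a strictly convex function over a small convex domain of $\overbar M^n$; consequently the level sets $S_t:=h^{-1}(t)$ are, for small $t>0$, embedded topological $(n-1)$-spheres bounding convex domains inside $P_t$, and $M_t:=h^{-1}([0,t])$ is a closed $n$-disk on which $f$ is an embedding.

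Next I would run a maximum--continuation argument. Let $T\in(0,+\infty]$ be the supremum of the $t_0$ such that, for all $t\in(0,t_0]$, $M_t$ is a closed disk and $f|_{M_t}$ is an embedding onto a vertical graph over a convex domain of $\overbar M^n$. The heart of the proof is to show that the configuration can break down as $t\uparrow T$ only in one of two ways: either (a) a new critical point appears on $h^{-1}(T)$, which by the Morse analysis above is necessarily a local maximum, forcing $S_t$ to collapse to a point as $t\to T^-$, so that $M$ is compact, $h$ is a Morse function with exactly two critical points, and Reeb's sphere theorem gives that $M$ is homeomorphic to $\s^n$; or (b) no new critical point occurs and the graphs exhaust $M$ with $T=+\infty$, so that $f$ is an entire vertical graph over a convex domain $D\subseteq\overbar M^n$, whence $f$ is proper and $M$ is homeomorphic to $\R^n$. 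Excluding every other scenario amounts to ruling out the appearance of \emph{vertical points} (points where $\langle\xi,\partial_t\rangle=0$) in the region swept out below the level $T$, and to showing that the convexity of each $S_t$ inside $P_t$ persists; both reduce to a tangency/maximum-principle comparison in $\overbar M^n\times\R$ between $f(M)$ and a vertical cylinder $S\times\R$ over a convex hypersurface $S$ of $\overbar M^n$ (e.g.\ a geodesic sphere), together with the Gauss equation relating $\alpha_f$, the ambient curvature and the intrinsic curvature of $M$. In the spherical case one additionally uses that a convex domain of $\s^n$ lies in an open hemisphere, which confines the geometry and renders the cylinders over great spheres totally geodesic.

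Finally, once $f$ is known to be a vertical graph over a convex domain $D\subseteq\overbar M^n$ that is either capped off over a boundary sphere (compact case) or unbounded in $\overbar M^n\times\R$ (noncompact case), properness of $f$ follows at once; and when $\overbar M^n$ is a Hadamard manifold, $\overbar M^n\times\R$ is itself a Hadamard manifold, so the complete embedded locally convex hypersurface $f(M)$ bounds a convex body by Remark \ref{rem-convexbody}, which is exactly the asserted conclusion (note that no convex-body statement is claimed for $\overbar M=\s^n$, where $\s^n\times\R$ is not totally convex). I expect the main obstacle to be the second step — controlling the vertical points along the sweep and the persistence of convexity of $S_t$ in $P_t$ when $\overbar M^n$ is a \emph{general} Hadamard manifold, since, unlike the case $\h^2\times\R$, there are no totally geodesic vertical hyperplanes at one's disposal and one must argue with curved convex barriers; the hypothesis $n\ge3$ should be used precisely here, to guarantee via an Alexander/Sacksteder-type argument inside the slices that each $S_t$ is a connected embedded sphere bounding a convex domain of $P_t$.
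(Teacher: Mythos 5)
Your overall frame --- Morse theory for the height function (critical points nondegenerate, of index $0$ or $n$) combined with an analysis of the horizontal slices, with Alexander-- and do Carmo--Warner--type theorems applied inside the slices (which is indeed where $n\ge 3$ enters) --- is the same as the paper's. But the continuation invariant you sweep with is the wrong one, and the step you yourself single out as the heart of the argument would require proving a false statement. You demand that $f|_{M_t}$ be an embedding onto a \emph{vertical graph} over a convex domain of $\overbar M^n$, and you propose to exclude vertical points (where $\langle\xi,\partial_t\rangle=0$) below the top level by a maximum-principle comparison with vertical cylinders. However, the hypersurfaces covered by Theorem \ref{th-delima} typically \emph{do} have vertical points: the embedded strictly convex rotational spheres of Hsiang--Hsiang \cite{hsiang} and Pedrosa \cite{pedrosa}, quoted right after Theorem \ref{th-delima03}, satisfy all the hypotheses, and their ``equator'' consists entirely of vertical points. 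For such a sphere your sweep breaks down at the equator height $T$: no critical point of $h$ occurs there, the level sets do not collapse, and just above that level $M_t$ is no longer a vertical graph (the projection to $\overbar M^n$ becomes two-to-one near the equator). So neither of your alternatives (a) nor (b) occurs at $T$, and no barrier argument can rule this scenario out, because it genuinely happens.

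The repair is precisely the paper's key observation, which also dissolves your worry about ``persistence of convexity'' of the slices: since the leaves $\overbar M^n\times\{t\}$ are totally geodesic and $\partial_t$ is parallel, every connected component of a \emph{transversal} intersection $f(M)\cap(\overbar M^n\times\{t\})$ is automatically a locally strictly convex hypersurface of the leaf --- its scalar second fundamental form is a positive multiple of the restriction of $\alpha_f$ --- and transversality fails only at critical points of $h$, not at vertical points. Hence no comparison with cylinders, no Gauss-equation barrier, and no graph structure are needed: compact transversal slices are embedded $(n-1)$-spheres bounding convex bodies in the leaf, by Alexander's Theorem (Hadamard leaf) or do Carmo--Warner (spherical leaf), and the Morse-theoretic analysis then shows $h$ has exactly one or two critical points, giving $M$ homeomorphic to $\R^n$ or $\s^n$, the proper embeddedness, and (via Remark \ref{rem-convexbody}, as you correctly note) the convex-body conclusion when $\overbar M^n$ is Hadamard. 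Relatedly, your claim that in the noncompact case properness ``follows at once'' from an entire graph representation is glossed; in the paper's route embeddedness and properness are extracted from the level-set structure, exactly because a global graph representation is not available in general.
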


For the proof of Theorem \ref{th-delima}, 
we apply Morse Theory together with do Carmo-Warner and Alexander Theorems
to  show that, under the given conditions,
the height function of $f$  has either  a unique critical point,
and then $M$ is homeomorphic to $\R^n,$
or precisely two critical points, and then  $M$ is  homeomorphic to $\s^n.$
In both cases, $f$ is proved to be a proper
embedding.  (Our argument is based on the following fact:
Any  connected component of a transversal intersection $\Sigma_t:=f(M)\cap(\overbar M^n\times\{t\})$
is  a hypersurface of the horizontal hyperplane $\overbar M^n\times\{t\}.$ In addition, such a hypersurface
has positive-definite second fundamental form, since that holds for $f.$ Hence, if $\Sigma_t$ is compact, it is embedded
and homeomorphic to $\s^{n-1},$ by do Carmo-Warner and Alexander Theorems.)

We also considered the dual case of Theorem \ref{th-delima}, as stated below,
in which the height function of the hypersurface has no critical points.

\begin{theorem}[de Lima \cite{delima}]  \label{th-delima02}
For $n\ge 3,$ let $f:M^n\rightarrow\overbar M^n\times\R$  be a proper,
connected, oriented, and cylindrically bounded  hypersurface with positive semi-definite second
fundamental form, where $\overbar M^n$ is either a Hadamard manifold
or the sphere $\s^n.$
Then, $f$ is an embedding, and
$f(M)=\Sigma\times\R,$ where $\Sigma\subset\overbar M^n\times\{0\}$ is a submanifold
homeomorphic to $\s^{n-1}$ which bounds  a convex body in $\overbar M^n\times\{0\}.$
\end{theorem}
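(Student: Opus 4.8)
The plan is to study $f$ through its behaviour along the $\R$-factor, to show that every horizontal slice of $f(M)$ is an embedded convex hypersphere, and finally that all of these slices coincide. Write $h:=\pi_\R\circ f$ for the height function and $\nu$ for a global unit normal of $f$ (which exists since $M$ and $\overbar M\times\R$ are oriented), chosen so that the shape operator is positive semi-definite. In the dual situation one works under the hypothesis that $h$ has no critical point, so $h$ is a submersion; and since $f$ is cylindrically bounded, say $f(M)\subset C\times\R$ with $C\subset\overbar M$ compact, and $f$ is proper, $h$ is proper as well, because $h^{-1}([a,b])=f^{-1}(C\times[a,b])$. A proper submersion of a connected manifold onto $\R$ is surjective and, by Ehresmann's theorem (the gradient flow of $h$ providing an explicit trivialisation over the contractible base $\R$), a trivial fibre bundle; hence $M$ is diffeomorphic to $F\times\R$, where $F:=h^{-1}(0)$ is compact (by properness) and connected (since $M$ is), and each level set $F_t:=h^{-1}(t)$ is a compact, connected, orientable hypersurface of $M$ diffeomorphic to $F$.

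Next I would examine the horizontal slices $\Sigma_t:=f(M)\cap(\overbar M\times\{t\})=f(F_t)$, viewed as immersed hypersurfaces of $\overbar M\times\{t\}\cong\overbar M$; here $f(M)$ meets the slice transversally because $\n$ is nowhere normal to $f(M)$, so $f|_{F_t}$ is indeed an immersion into the slice. Since $\overbar M\times\R$ is a Riemannian product, $\n$ is parallel, and a short computation shows that the second fundamental form of $\Sigma_t$ inside $\overbar M\times\{t\}$ is a positive multiple of the restriction of $\alpha_f$ to $T\Sigma_t$; in particular it is semi-definite, i.e., $\Sigma_t$ is locally convex (Bishop's theorem). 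Applying Theorem~\ref{th-docarmo-warner} (do Carmo--Warner, for $\overbar M=\s^n$) and Theorem~\ref{th-alexander} (Alexander, for $\overbar M$ a Hadamard manifold) to the compact, connected, orientable $(n-1)$-dimensional $F_t$ --- which uses $n\ge3$ --- one gets, provided $\Sigma_t$ is not totally geodesic, that $f|_{F_t}$ is an embedding, $F_t$ is homeomorphic to $\s^{n-1}$, and $\Sigma_t$ bounds a convex body $\Omega_t\subset\overbar M\times\{t\}$. (A totally geodesic slice cannot occur when $\overbar M$ is Hadamard, since a compact totally geodesic hypersurface would be a compact Hadamard manifold; when $\overbar M=\s^n$ it is an equator, and one checks directly that then $f(M)$ is already the asserted cylinder.) Because $h$ separates the fibres $F_t$ and $f$ is injective on each of them, $f$ is globally injective, hence --- being a proper injective immersion --- an embedding, and $f(M)=\bigcup_t\Sigma_t$ is a closed embedded hypersurface.

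It remains to show that all the slice bodies $\Omega_t$ coincide after the obvious identification of the slices. Suppose first that $\overbar M$ is a Hadamard manifold, so that $\overbar M\times\R$ is one too. Then $f(M)$ is a complete, embedded, locally convex hypersurface of a Hadamard manifold, hence bounds a convex body $\widehat\Omega$ (Remark~\ref{rem-convexbody}); intersecting $\overline{\widehat\Omega}$ with the totally geodesic slice $\overbar M\times\{t\}$ returns exactly $\overline{\Omega_t}$. Now fix $x\in\Omega_{t_0}$ and $t>t_0$; pick $y_T\in\overline{\Omega_T}$ for each $T>t$ (nonempty since $h$ is surjective) and consider the point of height $t$ on the geodesic segment from $(x,t_0)$ to $(y_T,T)$. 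As the metric splits, this point is $(\sigma_T(\lambda_T),t)$, where $\sigma_T$ is the geodesic of $\overbar M$ from $x$ to $y_T$ and $\lambda_T=(t-t_0)/(T-t_0)$; it lies in $\overline{\Omega_t}$ and at distance at most $\lambda_T\,\mathrm{diam}(C)$ from $x$. Letting $T\to+\infty$ gives $x\in\overline{\Omega_t}$, and points of the form $(z_T,-T)$ handle $t<t_0$. Therefore $\overline{\Omega_{t_0}}\subset\overline{\Omega_t}$ for all $t_0,t$, so all the $\overline{\Omega_t}$ equal a fixed convex body $\overline\Omega$, whence $\widehat\Omega=\Omega\times\R$ and $f(M)=\partial\widehat\Omega=\Sigma\times\R$ with $\Sigma:=\partial\Omega$ a convex hypersphere of $\overbar M$, as claimed.

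The case $\overbar M=\s^n$ is where I expect the real difficulty to lie: $\s^n\times\R$ is \emph{not} a Hadamard manifold, so Remark~\ref{rem-convexbody} is unavailable and the last paragraph must be replaced by a different argument. The natural route is to show that all the slices $\Sigma_t$ lie in one common open hemisphere of $\s^n$ --- each $\Sigma_t$ already bounds a convex body inside \emph{some} open hemisphere by do Carmo--Warner, and the cylindrical bound on $C$ should prevent these hemispheres from drifting around $\s^n$ as $t$ varies --- then to apply the Beltrami map \eqref{eq-beltramimap} inside each slice, transferring the whole configuration to $\R^n\times\R$, run the previous paragraph there, and pull back (which also yields the convex body property of $\Sigma$). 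Establishing this uniform control of the hemispheres in $t$ is, I expect, the principal obstacle; the secondary points requiring care are the continuous dependence of $\Omega_t$ on $t$ and the bookkeeping around totally geodesic slices.
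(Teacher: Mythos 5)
Your Hadamard-manifold case is essentially the paper's own argument (local convexity of the embedded $f(M)$ gives a convex body in the Hadamard manifold $\overbar M^n\times\R$ as in Remark~\ref{rem-convexbody}, and cylindrical boundedness plus unboundedness of the height then forces vertical invariance), and your slice analysis via do Carmo--Warner and Alexander matches the foliation step the paper takes over from the proof of Theorem~\ref{th-delima}. The genuine gap is the spherical case, which you explicitly leave open: the route you sketch (uniform control of the hemispheres containing the slices $\Sigma_t$, then Beltrami maps slice by slice) is not carried out, and it is not what the paper does at all.

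The paper's spherical argument is intrinsic and short. By Daniel's Gauss formula for hypersurfaces of $\s^n\times\R$, $K_M(X,Y)=\det A|_{{\rm span}\{X,Y\}}+\bigl(1-\|T(X,Y)\|^2\bigr)$, where $T$ is the tangential projection of $\partial_t$; positive semi-definiteness of $\alpha_f$ and $\|T(X,Y)\|\le 1$ give $K_M\ge 0$. Since the slice analysis already shows $M$ is homeomorphic to $\s^{n-1}\times\R$, which for $n\ge 3$ is noncompact and not homeomorphic to $\R^n$, Perelman's Soul Theorem forces, at \emph{every} point of $M$, an orthonormal pair $X,Y$ with $K_M(X,Y)=0$; the Gauss formula then yields $\|T(X,Y)\|^2=1+\det A|_{{\rm span}\{X,Y\}}\ge 1$, hence $\|T\|=1$ and $T=\partial_t$ everywhere. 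Thus $\partial_t$ is tangent to $f(M)$, so $f(M)$ is invariant under vertical translations and equals $\Sigma\times\R$ with $\Sigma=f(M)\cap(\s^n\times\{0\})$ a convex hypersphere by your slice step. This curvature-plus-Soul-Theorem idea is the missing ingredient in your proposal; without it (or a proof of your hemisphere-uniformity claim, which you acknowledge you do not have), the spherical half of the theorem remains unproved.
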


We recall that a hypersurface $f:M^n\rightarrow\overbar M^n\times\R$ is said to
be \emph{cylindrically bounded} if there exists a
geodesic ball $B\subset\overbar M^n\times\R$ such that $f(M)\subset B\times\R.$
Notice that, if $\overbar M^n\times\R$ is compact, any hypersurface
$f:M^n\rightarrow\overbar M^n\times\R$ is cylindrically bounded.

Let us outline the proof of Theorem \ref{th-delima02}.

First, observe that
the height function of $f$ is unbounded, since it has no critical points and
$f$ is proper. Then, proceeding as in the proof of Theorem \ref{th-delima}, we conclude
from do Carmo--Warner and Alexander Theorems that $f(M)$ is foliated by
embedded locally convex topological $(n-1)$-spheres, so that
$M$  is homeomorphic to $\s^{n-1}\times\R,$ and $f$ is an embedding.

The local convexity of $f$ implies that $f(M)$ is the boundary of a convex set
of $\overbar M^n\times\R$ in the case $\overbar M^{n}$ is a Hadamard manifold. This, together with
the cylindrical boundedness of $f,$ easily implies that $f(M)=\Sigma\times\R,$ as stated.

In the spherical case, we have to consider the Gauss formula for hypersurfaces
 $f:M\rightarrow\s^n\times\R:$
\[
K_M(X,Y)=\det A|_{{\rm span}\{X,Y\}}+(1-\|T(X,Y)\|^2), \,\,\, X, Y\in TM,
\]
where $A$ is the shape operator of $f$ and $T(X,Y)$ is the orthogonal projection of
the gradient $T$ of its height function on ${\rm span}\{X,Y\}$  (see \cite{daniel}).
We add that $T$ itself is  the orthogonal projection of the vertical unit field $\partial_t$
on the tangent bundle $TM.$

From the above Gauss equation,
the sectional curvature $K_M$ of $M$ is nonnegative,  for $\det A|_{{\rm span}\{X,Y\}}\ge 0$ (by the
positive semi-definiteness of the second fundamental form of $f$),
and $\|T(X,Y)\|\le 1.$  Since $M$ is noncompact, for all $x\in M,$
there exist orthonormal vectors $X, Y\in T_xM$ satisfying $K(X,Y)=0.$  Otherwise,
the celebrated  Soul Theorem, by T. Perelman \cite{perelman},
would give that $M$ is homeomorphic to $\R^n.$
By applying Gauss equation to $(X,Y),$  one gets
\[
\|T(X,Y)\|^2=1+\det A|_{{\rm span}\{X,Y\}}\ge 1.
\]
Therefore, $\|T(X,Y)\|=1,$ which gives
$T=\partial_t,$  and then  that  $f(M)$ is a vertical cylinder
over a convex hypersurface $\Sigma$ of $\s^n.$

Once we have Theorem \ref{th-delima}, we can proceed as in the
proof of Theorem \ref{th-esp-gal-rosen02} to establish a Jellett--Liebmann type theorem
in the products $\mathbb{Q}_\epsilon^{n}\times\R, \,\epsilon\ne 0.$ Namely, we  perform
Alexandrov reflections on a given compact hypersurface $f:M\rightarrow\mathbb{Q}_\epsilon^{n}\times\R$
with respect to vertical hyperplanes $\Gamma=\Gamma_0\times\R,$ where
$\Gamma_0\subset\mathbb Q_\epsilon^n$ is a totally geodesic hypersurface  of $Q_\epsilon^n$
(for $\epsilon=1,$ $\Gamma_0$ is assumed to be in an open hemisphere of $\s^n$). The statement is as follows.

\begin{theorem}[de Lima \cite{delima}] \label{th-delima03}
For $n\ge 3$ and $\epsilon\in\{-1,1\},$  any compact, connected, and  locally strictly convex hypersurface
$f:M^n\rightarrow\mathbb{Q}_\epsilon^{n}\times\R$ with constant mean curvature
is congruent to an embedded rotational sphere. 
\end{theorem}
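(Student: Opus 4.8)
The plan is to follow the Alexandrov reflection scheme outlined just before the statement, using Theorem~\ref{th-delima} as the input that supplies embeddedness and the sphere topology. First I would note that a locally strictly convex hypersurface has positive definite second fundamental form (Bishop's Theorem), so in particular its height function, being the restriction of the projection $\pi_\R$ to a \emph{compact} hypersurface, must attain a maximum and hence has a critical point; Theorem~\ref{th-delima} then gives that $f$ is a proper embedding and $M\cong\s^n$, and (when $\epsilon=-1$) that $f(M)$ bounds a convex body $\Omega$ in $\h^n\times\R$. For $\epsilon=1$ one first checks, exactly as in the proof of Theorem~\ref{th-docarmo-warner} via the Beltrami map, that the horizontal projection of $f(M)$ lies in an open hemisphere of $\s^n$, so that one may work with totally geodesic hypersurfaces $\Gamma_0\subset\s^n$ lying in that hemisphere and the reflections across the associated vertical hyperplanes $\Gamma=\Gamma_0\times\R$ are well-defined isometries of the relevant region of $\s^n\times\R$.

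Next I would carry out the reflection argument itself. Fix a totally geodesic hypersurface $\Gamma_0\subset\mathbb{Q}_\epsilon^n$ and consider the foliation $\{\Gamma_0^s\}_{s\in\R}$ of (a neighborhood of $f(M)$ in) $\mathbb{Q}_\epsilon^n$ by the equidistant hypersurfaces to $\Gamma_0$; set $\Gamma^s=\Gamma_0^s\times\R$, each of which is a vertical hyperplane across which $\mathbb{Q}_\epsilon^n\times\R$ admits a reflection isometry $R_s$. Starting from $s$ large enough that $\Gamma^s$ is disjoint from the compact set $f(M)$, decrease $s$ until $\Gamma^s$ first touches $f(M)$, and then continue moving it, at each stage reflecting the part of $f(M)$ on the far side of $\Gamma^s$ to the near side. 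The standard Alexandrov alternative — using the strong maximum principle for the constant mean curvature (quasilinear elliptic) equation, applied either at an interior tangency or at a boundary tangency where the two sheets meet $\Gamma^s$ orthogonally — shows that there is a first critical parameter $s_0$ at which the reflected piece coincides with the corresponding piece of $f(M)$, i.e. $f(M)$ is symmetric with respect to $\Gamma^{s_0}$. Since $\Gamma_0$ was an arbitrary totally geodesic hypersurface of $\mathbb{Q}_\epsilon^n$, varying it over all such $\Gamma_0$ through a fixed point forces $f(M)$ to be invariant under a large group of isometries of $\mathbb{Q}_\epsilon^n\times\R$: concretely, invariance under reflection in every vertical hyperplane through a fixed vertical axis $\{q\}\times\R$, which pins down $f(M)$ as a rotational hypersurface about that axis. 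Being an embedded topological $n$-sphere that is rotationally symmetric, $f(M)$ is an embedded rotational sphere, as claimed.

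The main obstacle is making the Alexandrov reflection machinery genuinely work in $\mathbb{Q}_\epsilon^n\times\R$ rather than in a space form: one needs that the equidistant hypersurfaces $\Gamma_0^s$ really do foliate a neighborhood of $f(M)$ and that each $\Gamma^s$ bounds a region with a well-defined reflection isometry — this is where the restriction to $\epsilon\in\{-1,1\}$ and, for $\epsilon=1$, the confinement of the horizontal projection to an open hemisphere, is used, since in $\s^n$ two totally geodesic hypersurfaces always intersect and the reflection is only defined on a hemispherical region. A second, more technical point is the boundary version of the maximum principle: at the moment $\Gamma^{s_0}$ becomes a hyperplane of symmetry one must handle the case in which the tangency between $f(M)$ and its reflection occurs along $\Gamma^{s_0}$ itself, where the Hopf boundary-point lemma for the CMC operator is invoked after checking that both sheets are graphs meeting $\Gamma^{s_0}$ orthogonally; this is routine but is the step where local strict convexity is quietly needed, to guarantee that near such a boundary tangency point $f(M)$ is a graph over $\Gamma^{s_0}$. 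Once these points are in place, the conclusion that $f(M)$ is a rotational sphere is immediate, and uniqueness (up to ambient isometry) of the CMC rotational sphere in $\mathbb{Q}_\epsilon^n\times\R$ — which is classical — upgrades ``rotational sphere'' to ``congruent to a specific embedded rotational sphere''.
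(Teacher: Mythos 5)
Your proposal matches the paper's own argument: the paper likewise deduces embeddedness and the sphere topology from Theorem~\ref{th-delima} (the height function of a compact hypersurface automatically has critical points) and then applies Alexandrov reflections across vertical hyperplanes $\Gamma_0\times\R$, with the hemisphere restriction in the case $\epsilon=1$, exactly as you describe. Your additional remarks on the boundary-tangency case of the maximum principle and on the hemisphere confinement are just the standard details the paper leaves implicit, so the approach is essentially the same.
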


Concerning Theorem \ref{th-delima03}, we add
that Hsiang--Hsiang \cite{hsiang} (resp. R. Pedrosa \cite{pedrosa}) constructed
embedded and strictly convex rotational spheres in $\h^n\times\R$ (resp. $\s^n\times\R$)
of constant mean curvature.

Theorem \ref{th-delima} can also be used to establish a Hilbert--Liebmann type theorem
in $\mathbb{Q}_\epsilon^{n}\times\R$ for hypersurfaces of constant sectional curvature.
For $n\ge 3,$ such hypersurfaces were constructed and classified by F. Manfio and R. Tojeiro
in \cite{manfio-tojeiro}. The following result, for which we provided a distinct proof,
is part of their classification results.

\begin{theorem}[de Lima \cite{delima}, Manfio--Tojeiro \cite{manfio-tojeiro}]  \label{th-delima04}
Let $M^n_c$ be a complete, connected and
orientable $n(\ge 3)$-dimensional Riemannian manifold with constant sectional curvature $c.$
Given an isometric immersion
$f:M_c^n\rightarrow\mathbb{Q}_\epsilon^{n}\times\R, \,\epsilon\in\{-1,1\},$
assume that $c>(1+\epsilon)/2.$  Then, $f$ is congruent
to an embedded rotational sphere.
\end{theorem}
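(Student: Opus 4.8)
The plan is to establish strict local convexity from the Gauss equation, to apply Theorem~\ref{th-delima} in order to realize $f$ as an embedded convex sphere, and then to recover the rotational symmetry by the Alexandrov reflection method used in Theorem~\ref{th-delima03}. For the first step, fix a unit normal $\eta$ along $f$ (which exists since $M_c^n$ and $\mathbb{Q}_\epsilon^{n}\times\R$ are orientable), let $A$ be the associated shape operator, and let $T$ be the orthogonal projection of $\partial_t$ onto $TM$ --- equivalently, the gradient of the height function $\pi_\R\circ f$ --- so that $\|T\|\le1$. Since the curvature tensor of $\mathbb{Q}_\epsilon^{n}\times\R$ is concentrated on the first factor, the Gauss equation reads, for orthonormal $X,Y\in T_pM$,
\[
K_M(X,Y)=\epsilon\bigl(1-\|T(X,Y)\|^{2}\bigr)+\det A|_{\mathrm{span}\{X,Y\}},
\]
where $T(X,Y)$ denotes the orthogonal projection of $T$ onto $\mathrm{span}\{X,Y\}$ (for $\epsilon=1$ this is the formula recalled above). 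Because $K_M\equiv c$ and $\epsilon\bigl(1-\|T(X,Y)\|^{2}\bigr)\le(1+\epsilon)/2$ in both cases $\epsilon=\pm1$, we obtain
\[
\det A|_{\mathrm{span}\{X,Y\}}=c-\epsilon\bigl(1-\|T(X,Y)\|^{2}\bigr)\ \ge\ c-\tfrac{1+\epsilon}{2}\ >\ 0
\]
for \emph{every} $2$-plane. Choosing $X,Y$ among the principal directions of $f$ shows that any two principal curvatures have the same nonzero sign; as $n\ge3$, all of them do, so $A$ is definite, and after fixing the orientation of $M$ it is positive definite. Hence $f$ has positive definite second fundamental form.

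Next I would observe that $M$ is compact: since $c>(1+\epsilon)/2\ge0$, the Ricci curvature of $M_c^n$ equals $(n-1)c>0$, so compactness follows from the Bonnet--Myers theorem. In particular the height function $\pi_\R\circ f$ attains a maximum, hence it has a critical point, and Theorem~\ref{th-delima} applies with $\overbar M^{n}=\mathbb{Q}_\epsilon^{n}$ --- which is $\s^{n}$ when $\epsilon=1$ and the Hadamard manifold $\h^{n}$ when $\epsilon=-1$ --- yielding that $f$ is a proper embedding, that $M$ is homeomorphic to $\s^{n}$ or $\R^{n}$ (hence to $\s^{n}$, by compactness), and that for $\epsilon=-1$ the image $f(M)$ bounds a convex body of $\h^{n}\times\R$. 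The displayed identity also forces $f$ to be umbilic, with principal curvature $\sqrt{c-\epsilon}$, at each critical point of the height function, which pins down the two poles of the would-be rotational sphere. At this stage $f(M)$ is a compact, embedded, locally strictly convex hypersurface of $\mathbb{Q}_\epsilon^{n}\times\R$ whose induced metric is that of the round sphere $\s^{n}(1/\sqrt{c})$.

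It remains to prove that $f(M)$ is rotational, and here I would reproduce the Alexandrov reflection argument of Theorem~\ref{th-delima03}: for each totally geodesic hypersurface $\Gamma_0\subset\mathbb{Q}_\epsilon^{n}$ (taken inside an open hemisphere when $\epsilon=1$), slide the vertical hyperplane $\Gamma=\Gamma_0\times\R$ from far away until it first meets $f(M)$, then reflect the near cap of $f(M)$ across $\Gamma$ until the reflected cap becomes internally tangent to $f(M)$, either at an interior point or with coinciding unit normals at a boundary point; near that tangency both the cap and $f(M)$ are convex graphs sharing the same constant sectional curvature $c$, so the comparison principle for the corresponding elliptic equation forces them to coincide, i.e.\ $f(M)$ is symmetric with respect to $\Gamma$. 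Since $\Gamma_0$ is arbitrary, $f(M)$ is symmetric across a vertical hyperplane in every direction, hence invariant under the rotations about a vertical axis, that is, an embedded rotational sphere; a final ODE analysis of its profile curve --- which, by the shape-operator description furnished above, is the unique convex profile with constant sectional curvature $c$ once $c>(1+\epsilon)/2$ --- identifies $f(M)$ with one of the rotational spheres of \cite{manfio-tojeiro} (cf.\ also the constant mean curvature examples of \cite{hsiang} and \cite{pedrosa}), so $f$ is congruent to it.

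The main obstacle is precisely this reflection step: one must verify that the constant-sectional-curvature condition, restricted to a convex hypersurface of $\mathbb{Q}_\epsilon^{n}\times\R$ written as a graph, yields an equation whose linearization between two convex solutions is elliptic, so that the strong maximum principle and the Hopf boundary lemma can be applied at the point of tangency. When $n\ge4$ this can be circumvented altogether: feeding the $2$-planes contained in $T^{\perp}$ and the $2$-planes through $T/\|T\|$ into the Gauss equation shows directly that $A$ has the principal curvature $\sqrt{c-\epsilon}$ with multiplicity $n-1$ and eigenbundle $T^{\perp}$, while $T$ is the remaining principal direction, with principal curvature depending only on $\|T\|$ --- exactly the shape operator of a rotational hypersurface --- and embeddedness together with compactness rules out the non-rotational alternatives. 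The case $n=3$, in which $T^{\perp}$ is only $2$-dimensional and ``$\det A|_{\Pi}\equiv c-\epsilon$ on the $2$-planes of $T^{\perp}$'' no longer forces umbilicity of $A|_{T^{\perp}}$, genuinely requires the reflection argument (or an additional Codazzi computation).
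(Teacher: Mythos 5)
Your first half is correct and coincides with the paper's own argument: the Gauss equation computation showing $\det A$ is positive on every tangent $2$-plane (hence $A$ is definite), compactness via Bonnet--Myers from $c>(1+\epsilon)/2\ge 0$, and the application of Theorem~\ref{th-delima} with $\overbar M^n=\s^n$ or $\h^n$ to get that $f$ is an embedding of a topological sphere. The gap is in the final step, the one that actually produces the rotational symmetry, and you have flagged it yourself without resolving it. The Alexandrov reflection argument of Theorem~\ref{th-delima03} hinges on a tangency (maximum) principle for the defining equation of the class of hypersurfaces being reflected. For constant mean curvature this is the maximum principle for a scalar quasilinear elliptic equation, and for $n=2$ constant Gaussian curvature gives an elliptic Monge--Amp\`ere equation on convex graphs; but for $n\ge 3$ ``constant sectional curvature'' is an overdetermined \emph{system} (one condition per tangent $2$-plane), not a single second-order scalar PDE in the graphing function, so there is no ``corresponding elliptic equation'' to which the strong maximum principle and Hopf lemma apply as stated. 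One could try to salvage this by replacing the full condition with a scalar consequence that is elliptic at convex solutions (e.g.\ constancy of the intrinsic scalar curvature), but you never identify such an equation, and this is exactly the missing idea.

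Your fallback for $n\ge 4$ is algebraically sound as far as it goes: plugging $2$-planes of $T^{\perp}$ and planes through $T$ into the Gauss equation does force, at points where $T\ne 0$, that $\langle A\cdot,\cdot\rangle$ restricted to $T^{\perp}$ equals $\sqrt{c-\epsilon}$ times the metric and that $T$ is a principal direction. But the jump from this pointwise normal form of the shape operator to ``$f(M)$ is a rotational sphere'' is unjustified: one still has to integrate this structure (show the level sets of the height function are geodesic spheres in the slices centered on a common vertical axis), which is precisely the nontrivial analysis the paper performs. Moreover the case $n=3$ is left open by your own admission. The paper avoids reflections altogether: it quotes from \cite{manfio-tojeiro} the fact --- valid for all $n\ge 3$ and proved there via the Codazzi equation --- that the gradient of the height function is a principal direction wherever it is nonzero, and then shows directly that $f(M)$ is the union of two embedded rotational hemispheres with a common axis. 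So the embeddedness/convexity part of your proposal matches the paper, but the rotational-symmetry step, as proposed, does not go through without supplying either a valid tangency principle or the Manfio--Tojeiro-type structure-and-integration argument.
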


In our proof of Theorem \ref{th-delima04}, we start by noticing that the condition
on $c,$ together with Myers Theorem and Gauss equation, gives that $M$ is compact and has
positive definite second fundamental form. Hence, from Theorem \ref{th-delima},
$M$ is a topological sphere of constant curvature $c$ and $f$ is an embedding.
Then, by using the fact that the gradient  of the height function of the hypersurface $f$ (when nonzero)
is one of its principal directions (as proved in \cite{manfio-tojeiro}), we managed to prove that
$f(M)$ is the connected sum of two rotational embedded hemispheres with a common axis, showing that
$f(M)$ is indeed a rotational embedded sphere.

We conclude our considerations by presenting a result due to
H. Rosenberg and R. Tribuzy \cite{rosenberg-tribuzy} on rigidity of  convex
surfaces of the  homogeneous $3$-manifolds known as $\mathbb{E}(k,\tau)$ \emph{spaces.}

Given $k, \tau\in\R$ with $k-4\tau^2\ne 0,$ one denotes by
$\mathbb{E}(k,\tau)$ the  total space of a Riemannian submersion over the simply connected
two-dimensional space form  of curvature $k$ with bundle curvature $\tau.$
The unit tangent field to the fiber, which is a Killing field, is denoted by $\xi.$
The $\mathbb{E}(k,\tau)$ spaces include
the products $\h^2\times\R$ and $\s^2\times\R$, the Heisenberg space ${\rm Nil}_3$,
the Berger spheres, and  the universal cover of the special linear group ${\rm SL}_2(\R)$.

As defined in \cite{rosenberg-tribuzy}, an oriented surface $f\colon M\rightarrow\mathbb{E}(k,\tau)$
is said to be \emph{strictly convex} if any of its principal curvatures  is at least $\tau.$
With this terminology, the Rosenberg--Tribuzy Theorem reads as follows.

\begin{theorem}[Rosenberg -- Tribuzy \cite{rosenberg-tribuzy}]
  Let $f(t)\colon M\rightarrow\mathbb{E}(k,\tau)$ be a smooth one-parameter
  family of isometric immersions with $f(0)=f.$ Suppose
  $f$  is strictly convex, $K(f_t(x)) = K(f(x))$ for $x\in M$ and all $t,$ and
  $H(f_t(x))= H(f(x))$ at a non-horizontal point $x$ of $M$. Then,
  there are isometries $h(t)\colon\mathbb{E}(k,\tau)\rightarrow\mathbb{E}(k,\tau)$
  such that $h(t)f(t)=f.$
\end{theorem}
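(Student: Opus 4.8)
The natural strategy is to promote the strict convexity and the two scalar invariants ($K$ and $H$) into a genuine deformation argument along the family $f(t)$, reducing everything to a statement about Codazzi‑type tensors. First I would recall the structural data attached to a surface $f\colon M\to\mathbb E(k,\tau)$: besides the first and second fundamental forms $I$ and $A_t$, there is the tangential part $T_t$ of the Killing field $\xi$ and the ``angle function'' $\nu_t=\langle\xi,N_t\rangle$, and these satisfy the Daniel‑type compatibility equations (Gauss, Codazzi, and the equations $\nabla_X T_t=\nu_t(A_tX-\tau JX)$, $d\nu_t(X)=-\langle A_tX-\tau JX,T_t\rangle$). Since $f(t)$ is a smooth family of isometric immersions of the \emph{fixed} Riemannian surface $M$, the metric $I$ is $t$‑independent, and by the Gauss equation in $\mathbb E(k,\tau)$ the extrinsic curvature $\det A_t=K_M-k\nu_t^2-\tau^2(1-3\nu_t^2)$ — wait, the precise constant combination is what matters: the hypothesis $K(f_t)=K(f)$ together with the Gauss equation forces $\nu_t^2$ to be constant in $t$, hence $\det A_t=\det A_0$ for all $t$. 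Combined with $H(f_t(x_0))=H(f(x_0))$ at one non‑horizontal point, and the fact that $f$ is strictly convex (so $A_0>\tau I>0$ and in particular the $A_t$ stay positive along the path by continuity), we get that the symmetric functions $\operatorname{tr}A_t$ and $\det A_t$ of $A_t$ agree with those of $A_0$ at the point $x_0$ for all $t$.

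The next step is to differentiate in $t$. Writing $\dot A=\partial_t A_t|_{t=0}$ and $\dot T$, $\dot\nu$ for the corresponding derivatives, the condition $K(f_t)\equiv K(f)$ gives $\nu_t\dot\nu_t\equiv 0$ on $M$; since non‑horizontal points are dense (strict convexity makes the set $\{\nu=0\}$ ``small'' — it is the zero set of the Jacobi‑type function $\nu$, which cannot be open), this yields $\dot\nu\equiv0$, and then $\det A_t$ is constant in $t$ \emph{pointwise on all of $M$}. The derivative of $\det A_t$ being zero reads $\operatorname{tr}(\operatorname{cof}(A_0)\dot A)=0$, i.e. $\langle \dot A, \operatorname{cof}A_0\rangle=0$; since $A_0$ is positive definite, $\operatorname{cof}A_0=(\det A_0)A_0^{-1}$ is positive definite, so this is a genuine linear constraint forcing $\dot A$ to be ``$A_0^{-1}$‑traceless''. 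Meanwhile, differentiating the Codazzi equation for $A_t$ shows that $\dot A$ is itself a Codazzi tensor (with respect to a modified connection coming from the $\tau JX$ correction terms), and the equation $\nabla_X T_t=\nu_t(A_tX-\tau JX)$ differentiated at $t=0$ — using $\dot\nu=0$ — gives $\nabla_X\dot T=\nu_0\dot AX$, so $\dot T$ is a potential‑type vector field for $\dot A$. The combination ``symmetric, $A_0^{-1}$‑traceless, Codazzi, and total derivative of the position'' is exactly the setup of a classical rigidity lemma (a Cohn‑Vossen/Herglotz‑style integral identity adapted to $\mathbb E(k,\tau)$): one integrates a suitably weighted divergence over the compact $M$ and concludes $\dot A\equiv0$.

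Once $\dot A\equiv 0$ at $t=0$ — and the same argument applies at every $t$, since the hypotheses are preserved along the family (strict convexity is open, $K$ is constant in $t$ by assumption, and $H$ at $x_0$ stays constant because $\operatorname{tr}A_t$ is determined by $\det A_t$ and $H(f_t(x_0))$, which we must check propagates) — we get that $A_t\equiv A_0$ and $\nu_t\equiv\nu_0$, $T_t\equiv T_0$ for all $t$. By Daniel's fundamental theorem for isometric immersions into $\mathbb E(k,\tau)$ (uniqueness part), two immersions of $M$ with the same $(I,A,T,\nu)$ differ by an ambient isometry; applied to $f(t)$ and $f(0)=f$ this produces isometries $h(t)\colon\mathbb E(k,\tau)\to\mathbb E(k,\tau)$ with $h(t)f(t)=f$, and smoothness of $h(t)$ in $t$ follows from smoothness of the family together with the (discrete‑modulo‑identity‑component) structure of the isometry group. \textbf{The main obstacle} I anticipate is the rigidity lemma in the middle paragraph: producing the correct integral identity — the right weight and the right modified divergence — so that the boundary‑free integration over compact $M$ forces $\dot A=0$, rather than merely constraining it. In the classical $\R^3$ ovaloid case this is the Herglotz integral formula, and here the $\tau JX$ terms and the non‑constant angle function $\nu_0$ complicate the choice of integrating factor; getting that weight right, and verifying that the set of horizontal points genuinely has empty interior under strict convexity (so that $\dot\nu\equiv0$ really follows), are the two technical crux points.
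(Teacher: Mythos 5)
The decisive step of your argument is missing. Everything funnels into the claim that the variation field $\dot A$ must vanish, and the tool you invoke for this --- a Herglotz/Cohn--Vossen-type integral identity adapted to $\mathbb{E}(k,\tau)$ --- is exactly what you yourself flag as ``the main obstacle'' and never produce; that lemma \emph{is} the theorem, not a technical loose end. Note also that the linearized Codazzi equation is not a clean Codazzi condition on $\dot A$: since $\dot T$ need not vanish, one gets a coupled system in $\dot A$ and $\dot T$ through the source term $(k-4\tau^2)\,\nu\bigl(\langle Y,\dot T\rangle X-\langle X,\dot T\rangle Y\bigr)$, so even the setup of the hoped-for identity is unsettled. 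Worse, your route requires integrating over a closed surface, but compactness of $M$ is not among the hypotheses (and the actual proof never uses it), and it is unclear how a global integral identity would absorb the mean-curvature hypothesis, which is imposed only at a single non-horizontal point, whereas only $\det A_t$ is controlled pointwise. Two smaller slips point the same way: you conflate horizontal and vertical points (horizontal means $\xi\perp T_xM$, i.e.\ $\nu=\pm1$ and $T=0$; the set $\{\nu=0\}$ is the \emph{vertical} set, and the relevant fact is that horizontal points are isolated of index one), and the positivity $A_0>\tau I>0$ you use to make $\operatorname{cof}A_0$ definite fails when $\tau=0$ (e.g.\ in $\h^2\times\R$), where ``strictly convex'' in the sense of this paper only gives $A\ge 0$.

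For comparison, Rosenberg and Tribuzy neither linearize nor integrate. On the open set $U$ of non-horizontal points they write $\xi=(\cos\theta)e_1+(\sin\theta)N$ with $e_1=P(\xi)/\|P(\xi)\|$, use convexity to show $\theta$ is a submersion near vertical points, introduce a second angle $\phi$ via a unit field orthogonal to the gradient of $\theta$, and derive an ODE for $\phi$ involving $\theta$; uniqueness of solutions of this ODE, together with the hypotheses on $K$ and on $H$ at the one non-horizontal point, forces $\theta$ and $\phi$ --- hence the Daniel data $(A_t,T_t,\nu_t)$ --- to be independent of $t$, and the isolatedness (index one) of horizontal points lets this propagate across $M$. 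Only your final step, invoking Daniel's fundamental theorem to produce the isometries $h(t)$, coincides with theirs; the bridge leading to it in your write-up is the unproved rigidity lemma, so as it stands the proposal is not a proof.
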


In the above statement, $K(f(x))$ and $H(f(x))$ denote the Gaussian and mean curvatures,
respectively,  of  the immersion $f$ at a point $x\in M.$ Also,
a point $x\in M$ is called \emph{horizontal} (resp. \emph{vertical})
if $\xi$ is orthogonal (resp. parallel) to  $T_xM.$

The idea of the proof is to  consider first the unit field
\[
e_1:=\frac{P(\xi)}{\|P(\xi)\|}
\]
on the open set $U\subset M$ of non horizontal points of $f,$ where
$P$ denotes the orthogonal projection on the tangent bundle $TU.$ Then, on $U,$
there is a well defined differentiable angle function $\theta$ such that
\[
\xi=(\cos\theta) e_1+(\sin\theta) N,
\]
where $N$ is the unit normal to $f.$ Moreover, it is  shown that, due to the convexity of $f,$
for any vertical point $x\in M,$ $\theta$ is a submersion in a neighborhood of $x.$
This allows one to choose a unit field $v\in TU$ orthogonal
to the gradient of $\theta,$ and then define a second angle function $\phi$ on $U$ as
\[
v=(\cos\phi)e_1+(\sin\phi)e_2,
\]
where $e_2=Je_1$ and $J$ is the positive $\pi/2$-rotation.

In this setting,
a computation (in which the convexity of $f$ plays a fundamental role) shows that
$\phi$ is the solution of an ODE which involves the function $\theta.$
From uniqueness of solutions of ODE's satisfying initial conditions, and the hypotheses on
the curvature functions $H$ e $K,$ one concludes that the functions $\phi$ and $\theta$
are the same for all immersions $f(t).$

It is also proved from the convexity of $f$  that the horizontal points of any $f(t)$ are all isolated
and have index one. From this, and
the above considerations, one has that all immersions $f(t)$ satisfy the hypotheses of
the fundamental theorem for immersions in homogeneous $3$-manifolds, by B. Daniel \cite{daniel2},
proving that they are all congruent to each other, as asserted.

\section*{Acknowledgements}
The author is indebt to both anonymous referees for their corrections and valuable suggestions.
They have considerably improved the presentation of the paper.

\end{document}